\newtheorem{theorem}{Theorem}[section]
\newtheorem{lemma}[theorem]{Lemma}
\newtheorem{assumption}[theorem]{Assumption}
\newcommand\be{\begin{enumerate}}
\newcommand\ee{\end{enumerate}}
\numberwithin{equation}{section}
\newtheorem{example}[theorem]{Example}
\newtheorem{remark}[theorem]{Remark}
\begin{document}

\title[Proportional feedback control with noise]{Stabilization of difference equations 
with noisy proportional feedback control}
\author[E. Braverman and A. Rodkina]
{E. Braverman and A. Rodkina}

\address{Department of Mathematics, University of Calgary, Calgary, Alberta T2N1N4, Canada}
\email{maelena@math.ucalgary.ca}
\address{Department of Mathematics  \\
The University of the West Indies, Mona Campus, Kingston, Jamaica}
\email{alexandra.rodkina@uwimona.edu.jm}

\thanks{The authors were supported by the NSERC grant RGPIN-2015-05976 and American Institute of Mathematics SQuaRE program}

\begin{abstract}
Given  a deterministic difference equation $x_{n+1}= f(x_n)$, we would like 
to stabilize any point $x^{\ast}\in (0, f(b))$, where $b$ is a unique maximum point of $f$,  
by introducing  proportional feedback (PF) control. We assume that  PF control contains  
either a multiplicative $x_{n+1}= f\left( (\nu + \ell\chi_{n+1})x_n \right)$ 
or an additive noise $x_{n+1}=f(\lambda x_n) +\ell\chi_{n+1}$.  We study conditions under which the 
solution eventually enters some interval, treated as a stochastic (blurred) equilibrium. 
In addition, we prove that, for each $\varepsilon>0$,  when the noise level  $\ell$ is sufficiently small, 
all solutions eventually belong to the interval $(x^{\ast}-\varepsilon,x^{\ast}+\varepsilon)$.

{\bf AMS Subject Classification:} 39A50, 37H10, 34F05, 39A30, 93D15, 93C55

{\bf Keywords:} stochastic difference equations; stabilization; Proportional Feedback control; 
population models; Beverton-Holt equation
\end{abstract}

\maketitle

\section{Introduction}
\label{sec:intr}

Difference equations can describe population models with non-overlapping generations. 
For simplicity, the parameters involved are combined and reduced to a single value 
in a one-dimensional model.  
In the logistic equation this parameter describes an intrinsic growth rate.
While demonstrating stable dynamics for small parameter values, as the parameter grows, solutions can become 
chaotic. Thus, even for simplest one-dimensional models,  solution behaviour can become complex, resulting in the infinitesimal difference between the two initial values being amplified, leading to unpredictable dynamics. To avoid this situation, either stepwise or periodic types of control were suggested.  
One of efficient methods of chaos control is the proportional feedback (PF) which consists in the reduction of the state variable at each step (or selected steps), corresponding to harvesting or pest management. For a one-dimensional map
\begin{equation}
\label{1}
x_{n+1}=f(x_n),\quad x_0>0, \quad n\in \mathbb N,
\end{equation}
PF method involves reduction  of the state variable at each step,  proportional to the size of the variable
\begin{equation}
\label{2}
x_{n+1}=f(\nu x_n),\quad x_0>0 , \quad n\in \mathbb N, \quad \nu \in (0,1].
\end{equation}
This type of control may describe harvesting with a constant effort  \cite{clark}, where the harvest 
is proportional to the population size, or pest control with constant efficiency \cite{seno,zipkin}. Sometimes reduction boosts population sizes 
\cite{seno,zipkin}, this is called the hydra effect, see, for example, \cite{LH2012} and its literature list.
Stabilization with proportional feedback control was recently studied in \cite{NODY,Carmona,Liz2010}, 
see also references therein.

Since in many cases there is stochasticity involved in the control, 
it is reasonable to model population dynamics processes with stochastic equations.  Stability of 
solutions of stochastic difference equations was considered in several publications, see  
\cite{ABR,AKMR,AMR,Shai,BR5,KR09,Shai} and references therein. Paper \cite{ABR} proves 
non-exponential stability and estimates decay rates for solutions of nonlinear equations with unbounded noise, 
\cite{AKMR} and \cite {KR09} are focused on  local dynamics of polynomial equations with fading 
stochastic perturbations, 
in \cite {AMR} stabilization of difference equations by noise was introduced and justified, 
\cite{Shai} is devoted to optimal control for Volterra equations. 
In \cite{BKR6, BR2,  BR3, BR5} stochastic equations connected with population models were 
considered: \cite{BR2,  BR3} discuss stabilization of two-cycles of equations under stochastic perturbations,  \cite{BR5} deals with convergence of solutions of equations with additive perturbations, 
in \cite{BKR6} stabilization of difference equations with noisy prediction-based control was presented.

Let us assume that the harvesting effort (or management efficiency) varies resulting in a  multiplicative noise
\begin{equation}
\label{3}
x_{n+1}= f\left( (\nu + \ell\chi_{n+1})x_n \right),\quad x_0>0, \quad n\in \mathbb N. 
\end{equation}
It is also possible to suppose that the  management efficiency is constant but the environment is stochastic, 
leading to an additive noise
\begin{equation}
\label{4}
x_{n+1}= \max \left\{ f(\nu x_n) +\ell\chi_{n+1}, 0 \right\}.
\end{equation}
In this paper we impose the  assumption on the function $f$ which describes its 
behaviour in a right neighbourhood  of zero. 

\begin{assumption}
\label{as:slope}
The function $f : [0, \infty) \rightarrow [0, \infty)$ is continuous, and
there is a real number $b>0$ such
that $f(x)$ is strictly monotone increasing, while the function $\frac{f(x)}{x}$ is strictly monotone
decreasing on $(0,b]$, $f(b)>b$, while $\frac{f(b)}{b} > \frac{f(x)}{x}$ for any $x \in (b, \infty)$.
\end{assumption}

\begin{remark}
\label{rem:bb1}
 Note that, once  Assumption \ref{as:slope} holds for a certain $b>0$, it is also satisfied for any $b_1\in (0, b]$.
\end{remark}
For example, the Ricker model
\begin{equation}
\label{eq:ricker}
x_{n+1} = f_1(x_n) = x_n e^{r(1-x_n)}
\end{equation}
satisfies Assumption~\ref{as:slope} with $b \leq 1/r$,
the truncated logistic model 
\begin{equation}
\label{eq:logistic}
x_{n+1} = f_2(x_n) = \max\left\{ rx_n (1-x_n), 0 \right\}
\end{equation}
with $r>2$ and $b \leq \frac{1}{2}$.  The modifications of the Beverton-Holt equation
$$
x_{n+1} = f_3(x_n) = \frac{Ax_n}{1+Bx_n^{\gamma}},\quad A,B>0,~~\gamma>1, 
$$
which is also called a Maynard Smith model \cite{Thieme},
and
$$
x_{n+1} = f_4(x_n) = \frac{Ax_n}{(1+Bx_n)^{\gamma}},\quad A,B>0,~~\gamma>1
$$
satisfy Assumption~\ref{as:slope}, with the parameters which lead to the existence of a positive equilibrium exceeding the  unique maximum point $x_{\max}$  and $b<x_{\max}$. In all the above examples, $f_i$ are unimodal: they increase for $x \in [0,x_{\max}]$ and decrease on $[x_{\max},\infty)$, with the only critical point on $[0,\infty)$ being a global maximum.

Note that in  Assumption~\ref{as:slope} we do not assume $f(0)=0$, though in most practical examples, this condition is satisfied.  
However, $f(0)=0$ does not necessarily imply that a finite limit $\displaystyle \lim_{x\to 0^+}\frac{f(x)}{x}$ exists: the Gompertz model with 
$\displaystyle f(x)=x \ln \left(\frac{K}{x} \right)$ satisfies Assumption~\ref{as:slope} with $b \leq K/e$, while this 
limit is $+\infty$.   
The function $f$ in \eqref{1} does not need to  be unimodal in order to satisfy  Assumption~\ref{as:slope}.
For example, the equation describing the growth of bobwhite quail populations \cite{Milton} 
\begin{equation} 
\label{eq:milton}  
x_{n+1} = x_n \left( A+ \frac{B}{1+x_n^{\gamma}} \right),\quad A,B>0,~~\gamma>1,
\end{equation}
in the right-hand side involves a function, which, for some $A$ and $B$, is bimodal, 
i.e. has two critical points, the smaller of them is a local maximum,  while the larger one is a local minimum.  
However, Assumption~\ref{as:slope} is satisfied for the function $f(x_n)$ in the right-hand side of \eqref{eq:milton}, with $b$ not exceeding the  smallest  positive critical point.

In \cite{NODY}, stabilization of any point $x^{\ast} \in (0,b)$ using \eqref{2} was considered  
under the following conditions: $f$ increases on $(0,b)$ and satisfies $f(x)>x$,   
$f'(x)>0$ and $f''(x)<0$ for $x \in (0,b)$, as well as 
${\displaystyle \frac{f(x)}{x} < \frac{f(b)}{b} \, ,\quad\forall\;x>b}$. 

These conditions imply asymptotic stability \cite{NODY} and are more
restrictive than Assumption~\ref{as:slope}, which, with the appropriate control, also ensures asymptotic stability 
\cite{Brian} of an arbitrarily chosen point $x^{\ast} \in (0,b)$, for an appropriate $\nu$. Thus, in the present paper we stick up to less restrictive Assumption~\ref{as:slope}. The condition $f(x)>x$, $x\in (0,b]$ means that all positive 
fixed points $x$ of the original function  $f$ satisfy $x > b$. 

However, in the stochastic case, where the noise does not tend to zero as time grows,
stabilization in regular sense is impossible: at any stage, a solution will deviate from the equilibrium.
We study conditions under which the solution eventually enters some interval,
which can be treated as a stochastic (blurred) equilibrium.  
More exactly, in this paper we stochastically stabilize each point $x^*\in (f(0), f(b))$. 
First,  for  each  $x^*\in (f(0), f(b))$ we find $\nu=\nu(x^*)\in (0, 1)$ such that $x^*$ becomes  
a fixed  point for the function $g(x):=f(\nu x)$.  
Afterwards, we determine the maximum noise intensity  $\ell$ for which we can guarantee stability 
of the equilibrium $x^*$ in some stochastic sense.  

The approach and the results are  different in the two cases of equations with multiplicative noise \eqref{3} 
and with additive noise \eqref{4}. 
The main difference is that for the multiplicative noise our results hold almost surely, 
while for the additive noise we can prove them only with some probability. 
More exactly, for the solution $x_n$ of equation \eqref{3} with a multiplicative noise  
and an arbitrary positive  initial value $x_0$ we show, for each $\varepsilon>0$, that 
$x_n\in (y_1-\varepsilon, y_2 +\varepsilon)$ almost surely, for $n\ge N$, where $N$ is large enough. 
Here the points $y_1$ and $y_2$ satisfying $0<y_1<y_2$  and nonrandom $N$ are 
defined by the point $x^*$, which we are stabilizing, and by the noise intensity $\ell$. We also 
show that when $\ell\to 0$, the above interval is shrinking to $(x^*-\varepsilon, 
x^*+\varepsilon)$.
For the solution $x_n$ of equation \eqref{4}  with an additive noise and an arbitrary positive 
initial value $x_0$, we show, for each $\varepsilon>0$  and $\gamma \in (0, 1)$, the existence of a 
nonrandom $N=N(\varepsilon, x_0, \gamma)$ such that  probability $\displaystyle \mathbb P \{y_1\le x_n\le  
y_2+\varepsilon, \,\, \text{for} \,\,  n\ge N\}>\gamma$. In fact, the closer $\gamma$ is to 1, the bigger
becomes $N=N(\varepsilon, x_0, \gamma)$.  We also show that when 
$\ell\to 0$, with probability $\gamma$,  the above interval tends to $(x^*-\varepsilon, x^*+\varepsilon)$.


The paper is organized as follows.  In Section~\ref{sec:prelim} we introduce definitions, assumptions and prove some  preliminary results. Section \ref{sec:multi} considers multiplicative noise. 
In order to obtain in Section~\ref{subsec:stochmulti} the main result for the stochastic equation, 
we first prove several auxiliary statements for  a deterministic 
equation with variable PF control in Section~\ref{subsec:detmulti}. 
Section~\ref{sec:add} deals with additive stochastic perturbations. Similarly to the multiplicative case, first,  in Section~\ref{subsec:detadd}
variable deterministic perturbations are studied, and then the main stochastic result is obtained in Section \ref{subsec:stochadd}.  Section \ref{sec:examples} presents numerical examples illustrating the possibility of stabilization, while Section \ref{sec:discussion} discusses further developments and new problems arising from the present  research.

\section{Definitions  and  preliminaries}
\label{sec:prelim}

Let  $(\Omega, {\mathcal{F}},  (\mathcal{F}_n)_{n \in \mathbb{N}}, {\mathbb{P}})$ be  a complete 
filtered probability space, $\chi:=(\chi_n)_{n\in\mathbb{N}}$  be a sequence of independent random 
variables with the zero mean. The filtration $(\mathcal{F}_n)_{n \in \mathbb{N}}$ is supposed to be naturally generated by  the sequence $(\chi_n)_{n\in\mathbb{N}}$, i.e.
$\mathcal{F}_{n} = \sigma \left\{\chi_{1},  \dots, \chi_{n}\right\}$.

In the present paper, we consider \eqref{3} and \eqref{4},  
where the sequence $(\chi_n)_{n \in {\mathbb N}}$ satisfies the following condition.
\begin{assumption}
\label{as:chibound}
$(\chi_n)_{n \in {\mathbb N}}$ is a sequence of independent and identically  distributed  
continuous random variables, with the density function $\phi(x)$ such that
\[
\phi(x)>0, \quad x\in [-1, 1], \quad \phi(x)\equiv 0, \quad x\notin [-1, 1].
\]
\end{assumption}

We use the standard abbreviation ``a.s." for the wordings ``almost sure" or ``almost surely" with respect
to the fixed probability measure $\mathbb P$  throughout the text. 
A detailed discussion of stochastic concepts and notation can be found, for example, in \cite{Shiryaev96}.

Now we state some auxiliary results concerning the function $f(\nu x)$, for any $\nu \in (0,1]$.
Let Assumption \ref{as:slope} hold, then the function $f$ is  increasing and continuous 
when  restricted to the interval $(0, b)$. Thus $f$ has the  increasing  and continuous inverse  function 
\[
f^{-1}:[f(0), f(b)]\to [0, b].
\] 
Define
\begin{equation}
\label{def:Phi}
\Phi(z):=\frac z{f(z)}, \quad z\in (0, b). 
\end{equation}

By Assumption \ref{as:slope} the  function $\Phi$ is increasing, continuous and therefore uniquely 
invertible on $(0, b)$. The domain of the inverse function depends on the value of $f(0)$ at zero.
Under Assumption \ref{as:slope}, the limit 
\begin{equation*}
\label{def:lim}
 \lim_{x \to 0^+} \frac{f(x)}{x}
\end{equation*}
exists (finite or infinite), is positive and greater than 1. In fact, the  function $f(x)/x$ is 
decreasing on $(0,b)$ and $f(b)/b > 1$.  Note that if $f(0)>0$ then $\lim_{x \to 0^+} \frac{f(x)}{x}=+\infty$, however, 
in the case $f(0)=0$ the value of the limit   can be finite, as well as infinite.   Since
\[
\frac{f(x)}{x}=\frac 1{\Phi(x)},
\]
we have
\[
\lim_{x \to 0^+} \frac{f(x)}{x}=\lim_{x \to 0^+} \frac 1{\Phi(x)}.
\]
Therefore,  we can set
\begin{equation}
\label{def:phi0}
 \Phi(0): =\left\{\begin{array}{cc}
  0,  \hspace{-0.9cm} & \hspace{-0.9cm}\text{if} \quad \lim\limits_{x \to 0^+} \frac{f(x)}{x}=\infty;  \\ \\
  \lim\limits_{x \to 0^+} \frac {x}{f(x)}, \quad &\text{if $\lim\limits_{x \to 0^+} \frac{f(x)}{x}\in (1, \infty)$ } . 
 \end{array}\right.
\end{equation}
Thus, we have proved the following lemma.
\begin{lemma}
\label{lem:phi-1}
Let Assumption \ref{as:slope} hold, and $\Phi$ be defined as in \eqref{def:Phi} and \eqref{def:phi0}.  Then
\be
\item [(i)] $\Phi: (0, b)\to \left( \Phi(0),  \Phi(b)\right),\quad \Phi^{-1}: \biggl(\Phi(0),  \Phi(b)\biggr) \to (0, b)$;\\
\item [(ii)] $0\le \Phi(0)<\Phi(b)<1$;\\
\item [(iii)]  both functions, $\Phi$ and $\Phi^{-1}$, are increasing and continuous on their domains specified in (i).
\ee
\end{lemma}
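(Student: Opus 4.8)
The plan is to read off all three parts from the single observation that $\Phi$ is the reciprocal of $f(z)/z$. First I would check that $\Phi$ is well defined on $(0,b)$: since $f(x)/x$ is strictly decreasing on $(0,b]$ by Assumption~\ref{as:slope} and takes the value $f(b)/b>1$ at $x=b$, strict decrease forces $f(x)/x>f(b)/b>0$, hence $f(x)>0$, for every $x\in(0,b)$. Writing $\Phi(z)=1/(f(z)/z)$ then exhibits $\Phi$ as the reciprocal of a positive, strictly decreasing, continuous function, so $\Phi$ is itself strictly increasing and continuous on $(0,b)$. This already secures the monotonicity and continuity of $\Phi$ asserted in (iii).

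For (i) and (ii) I would invoke the standard fact that a strictly increasing continuous map on an open interval is a bijection onto the open interval bounded by its one-sided limits at the endpoints. The right endpoint is immediate: by continuity of $f$ at $b$ together with $f(b)>b>0$, we have $\lim_{z\to b^-}\Phi(z)=b/f(b)=:\Phi(b)$, and $\Phi(b)<1$ precisely because $f(b)>b$. For the left endpoint I would appeal to the two-case definition \eqref{def:phi0}: since $\lim_{x\to 0^+} f(x)/x$ exists in $(1,\infty]$, its reciprocal $\lim_{z\to 0^+}\Phi(z)$ exists and coincides exactly with the value $\Phi(0)$ prescribed there (namely $0$ when the limit is $+\infty$, and $\lim_{x\to0^+}x/f(x)$ when it is finite). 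Hence $\Phi$ maps $(0,b)$ bijectively onto $(\Phi(0),\Phi(b))$, which is (i). The inequality $\Phi(0)\ge0$ is built into the definition, while $\Phi(0)<\Phi(b)$ follows again from \emph{strict} monotonicity of $f(x)/x$: its left-hand limit strictly exceeds $f(b)/b$, so passing to reciprocals gives $\Phi(0)<\Phi(b)$; combined with $\Phi(b)<1$ this yields (ii).

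Finally, the statement (iii) for $\Phi^{-1}$ is the elementary theorem that the inverse of a continuous strictly increasing function is continuous and strictly increasing, applied on the range $(\Phi(0),\Phi(b))$. The one delicate point, and the step I would treat with care, is matching the left-endpoint limit to the piecewise definition \eqref{def:phi0} and confirming the strict inequality $\Phi(0)<\Phi(b)$ in the subcase $\Phi(0)>0$; everything else is a routine reciprocal-and-inverse argument. In particular I would make explicit that it is the strict decrease of $f(x)/x$, not merely its monotonicity, that rules out $\Phi(0)=\Phi(b)$ and keeps the image interval nondegenerate.
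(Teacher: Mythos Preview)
Your proposal is correct and follows essentially the same approach as the paper: the paper's proof is the discussion immediately preceding the lemma (ending with ``Thus, we have proved the following lemma''), which likewise argues that $\Phi$ is the reciprocal of the strictly decreasing, continuous, positive function $f(x)/x$, identifies the one-sided limit at $0$ with the piecewise definition \eqref{def:phi0}, and reads off $\Phi(b)<1$ from $f(b)>b$. Your write-up is more explicit on two points the paper leaves implicit---the positivity of $f$ on $(0,b)$ ensuring $\Phi$ is well defined, and the strictness of $\Phi(0)<\Phi(b)$---but the underlying argument is identical.
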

For each point $x^*\in (f(0), f(b))$,  we are looking for the control parameter $\nu=\nu(x^*)\in (0, 1)$  such that $x^*$ is the fixed  point of the function $g(x):=f(\nu x)$.  Define 
\begin{equation}
\label{def:nux*}
\nu=\nu(x^*):=\Phi(f^{-1}(x^*)).
\end{equation}
\begin{lemma}
\label{lem:fixg}
Let Assumption \ref{as:slope} hold.  For each $x^*\in ( f(0), f(b) )$, 
the control $\nu(x^*)$ defined by \eqref{def:nux*},  satisfies the following conditions:
\be
\item [(i)] $x^*$ is a fixed point of the function $g(x):=f(\nu x)$, i.e. $f\bigl(\nu(x^*)x^*\bigr)=x^*$;
\item [(ii)] $\nu(x^*)\in \left(\Phi(0), \,  \Phi(b)\right)\subset (0, 1)$;
\item [(iii)] $\nu(x^*)$ is an increasing function of $x^*$ on $( f(0), f(b) )$.
\ee
\end{lemma}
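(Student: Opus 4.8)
The plan is to read off all three claims directly from the explicit formula $\nu(x^*)=\Phi(f^{-1}(x^*))$ in \eqref{def:nux*}, using the monotonicity and mapping properties of $f^{-1}$ and $\Phi$ already recorded in Lemma~\ref{lem:phi-1}. The crucial algebraic observation, on which part~(i) rests, is that the product $\nu(x^*)\,x^*$ collapses to $f^{-1}(x^*)$. To establish this I would set $w:=f^{-1}(x^*)$; since $x^*\in(f(0),f(b))$, the point $w$ lies in $(0,b)$ and satisfies $f(w)=x^*$. The definition \eqref{def:Phi} of $\Phi$ then gives $\nu(x^*)=\Phi(w)=w/f(w)=w/x^*$, so that $\nu(x^*)\,x^*=w=f^{-1}(x^*)$. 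Applying $f$ to both sides yields $f(\nu(x^*)x^*)=f(w)=x^*$, which is exactly the fixed-point equation of part~(i).

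For part~(ii) I would first note that $f^{-1}$, being continuous and strictly increasing with $f^{-1}(f(0))=0$ and $f^{-1}(f(b))=b$, carries the open interval $(f(0),f(b))$ into the open interval $(0,b)$; hence $f^{-1}(x^*)\in(0,b)$. Lemma~\ref{lem:phi-1}(i) then places $\nu(x^*)=\Phi(f^{-1}(x^*))$ in $\left(\Phi(0),\Phi(b)\right)$, and Lemma~\ref{lem:phi-1}(ii) supplies the inclusion $\left(\Phi(0),\Phi(b)\right)\subset(0,1)$ via $0\le\Phi(0)<\Phi(b)<1$. Part~(iii) follows at once, since $\nu=\Phi\circ f^{-1}$ is a composition of two strictly increasing functions — $f^{-1}$ by construction and $\Phi$ by Lemma~\ref{lem:phi-1}(iii) — and is therefore strictly increasing on $(f(0),f(b))$.

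I do not anticipate a genuine obstacle here: the lemma is essentially a bookkeeping consequence of the preparatory results, and its content is concentrated in the one-line identity $\nu(x^*)\,x^*=f^{-1}(x^*)$. The only step requiring a moment of care is confirming that the \emph{open} interval $(f(0),f(b))$ is sent strictly inside the \emph{open} interval $(0,b)$, so that the endpoints are genuinely excluded and $\nu(x^*)$ stays strictly between $\Phi(0)$ and $\Phi(b)$; this is immediate from the strict monotonicity and continuity of $f^{-1}$.
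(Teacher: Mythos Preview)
Your proposal is correct and follows essentially the same approach as the paper: both arguments reduce to the identity $\nu(x^*)\,x^*=f^{-1}(x^*)$ obtained from $\Phi(w)=w/f(w)$, then invoke Lemma~\ref{lem:phi-1} for parts~(ii) and~(iii). The only cosmetic difference is your introduction of the auxiliary variable $w=f^{-1}(x^*)$, which the paper leaves implicit.
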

\begin{proof}

For each $x^*\in (f(0), f(b))$, we have
\[
\nu(x^*)=\Phi(f^{-1}(x^*))=\frac{f^{-1}(x^*)}{f(f^{-1}(x^*))}=\frac{f^{-1}(x^*)}{x^*},
\]
so
\[
f\bigl(\nu(x^*)x^*\bigr)=f\left(\frac{f^{-1}(x^*)}{x^*} x^*\right)=f\left(f^{-1}(x^*)\right)=x^*,
\]
which implies (i).

Since $x^*\in ( f(0), f(b) )$, 
\[
f^{-1}(x^*)\in (0, b),
\]
and by Lemma \ref{lem:phi-1} we have 
\[
\nu(x^*)\in \left(\Phi(0), \,  \Phi(b)\right)\subset (0, 1),
\]
which proves (ii). Since both $\Phi$ and $f^{-1}$ are increasing functions on $(0, b)$ and $( f(0), f(b) )$, respectively, we conclude that $\nu(x^*)=\Phi(f^{-1}(x^*))$  is increasing as a function of $x^*$ on $( f(0), f(b) )$, which concludes the proof of (iii).
\end{proof}

Finally, let us discuss the role of the inequality $f(b) > b$ in Assumption~\ref{as:slope}. 
If we omit this condition in Assumption~\ref{as:slope},  we should  also consider $b$ such that $f(b)  \leq b$.  
Using the assumptions that  $f(b)  \leq b$ and the  function $\Phi(x)=\frac{x}{f(x)}$ is continuous and monotone increasing on  
$(0,b]$, we get that $\Phi(x)<1$  for small $x>0$ and $\Phi(b) \geq 1$. 
Thus there is the only point $ x^{\ast}\in (0,b]$ where $\Phi( x^\ast)=1$, or $f( x^{\ast})= x^{\ast}$. 
According to Assumption~\ref{as:slope}, the continuous function $f$ is monotone increasing on $[0,  x^{\ast}]$,
$f(x) \geq x$ for $x \in [0,  x^{\ast}]$ and $f(x)<x$ for $x> x^{\ast}$.
By \cite[Lemma 1]{NODY}, see also \cite{Brian},
these conditions guarantee that all solutions of equation \eqref{1} with $x_0>0$
tend to $ x^{\ast}$ as $n \to \infty$. Thus, under $f(b) \leq b$ the unique positive equilibrium $ x^{\ast}$ is stable, 
and there is no need to apply any control to stabilize $ x^{\ast}$. 
This is the reason why we consider only the case when $f(b)>b$.

\section{Multiplicative Perturbations}
\label{sec:multi}

In this section we consider the deterministic PF with variable intensity $\nu_n \in (0,1]$
\begin{equation}
\label{eq:var}
x_{n+1}= f\left( \nu_n x_n \right), \quad x_0>0, \quad n\in \mathbb N,
\end{equation}
and then corresponding stochastic equation \eqref{3} with a multiplicative noise.  For each $x^*\in \bigl(f(0), f(b)\bigr)$,  we establish the control 
$\nu=\nu(x^*)$ and the interval such that 
a solution of \eqref{3} remains in this interval, once the level of noise $\ell$ is small enough.


\subsection{Deterministic multiplicative perturbations}
\label{subsec:detmulti}

In Lemma \ref{lem:multi} below we prove that when  intensity $\nu_n$ belongs to some interval, solution $x_n$ of \eqref{eq:var} will reach and remain  in another interval for big enough $n$.

As discussed in Section \ref{sec:prelim}, Assumption~\ref{as:slope} implies  that  for any  
$\displaystyle \mu  \in \left( \Phi(0), \, \Phi(b)\right)$ 
there  is a $y=\Phi^{-1}(\mu)\in (0,b)$, where $\Phi$ is defined in \eqref{def:Phi} and \eqref{def:phi0}.
\begin{lemma} 
\label{lem:multi}
Let Assumption~\ref{as:slope}  hold,  and $\mu_1$, $\mu_2$ be such that
\begin{equation}
\label{eq:mu_bounds}
\mu_1 \in \left( \Phi(0), \Phi(b) \right) , \quad \mu_2 \in \left(\Phi(0),\mu_1 \right), 
\end{equation}
and, for each $n \in {\mathbb N}$,
\begin{equation}
\label{eq:nu_bounds}
\nu_n \in [\mu_2,\mu_1].
\end{equation}
Then, for any $x_0>0$ and $\varepsilon>0$, there is $n_0 \in {\mathbb N}$ such that 
the solution $x_n$ of equation \eqref{eq:var} for any $n \geq n_0$ satisfies
\[
\nu_n x_n \in \left(\Phi^{-1}(\mu_2)-\varepsilon, \Phi^{-1}(\mu_1)+\varepsilon\right).
\]
\end{lemma}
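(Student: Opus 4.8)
The plan is to pass from $x_n$ to the sequence of \emph{arguments} $u_n := \nu_n x_n$. Since $x_{n+1}=f(\nu_n x_n)=f(u_n)$, this sequence obeys the recursion
$u_{n+1}=\nu_{n+1}f(u_n)$ with multipliers $\nu_{n+1}\in[\mu_2,\mu_1]$, and the desired conclusion is precisely that $u_n$ eventually lies in $(\Phi^{-1}(\mu_2)-\varepsilon,\Phi^{-1}(\mu_1)+\varepsilon)$. Writing $y_1:=\Phi^{-1}(\mu_2)$ and $y_2:=\Phi^{-1}(\mu_1)$, which belong to $(0,b)$ and satisfy $y_1<y_2$ by the monotonicity of $\Phi^{-1}$ from Lemma~\ref{lem:phi-1}, the target interval is $(y_1-\varepsilon,y_2+\varepsilon)$.

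First I would show that $u_n$ enters, and thereafter remains in, the interval $(0,b)$. The decisive identity is $\mu_1<\Phi(b)=b/f(b)$, i.e. $\mu_1 f(b)<b$. Hence whenever $u_n\le b$, monotonicity of $f$ on $(0,b]$ gives $u_{n+1}\le\mu_1 f(u_n)\le\mu_1 f(b)<b$, so $(0,b]$ is mapped into $(0,b)$, with positivity of $u_{n+1}$ clear since $\nu_{n+1}>0$ and $f(u)>u>0$ on $(0,b]$. When $u_n>b$, Assumption~\ref{as:slope} yields $f(u_n)/u_n<f(b)/b$, whence $u_{n+1}\le\mu_1 f(u_n)<(\mu_1/\Phi(b))u_n=:q\,u_n$ with $q<1$; thus as long as $u_n$ stays above $b$ it decays geometrically and must fall to $\le b$ after finitely many steps. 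Combining the two cases, there is $N_1$ with $u_n\in(0,b)$ for all $n\ge N_1$.

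On $(0,b)$ every map $u\mapsto\nu f(u)$ with $\nu\in[\mu_2,\mu_1]$ is increasing, and I would bracket the recursion between the two extreme maps $\overline F(u):=\mu_1 f(u)$ and $\underline F(u):=\mu_2 f(u)$. Each is an increasing self-map of $(0,b)$ with a unique interior fixed point — namely $y_2$ for $\overline F$ (as $\overline F(u)=u\Leftrightarrow\Phi(u)=\mu_1$) and $y_1$ for $\underline F$ — which attracts all of $(0,b)$ by the standard cobweb argument for an increasing map lying above the diagonal left of its fixed point and below it to the right. Since $u_{n+1}\le\overline F(u_n)$ and $\overline F$ is increasing, comparing $u_n$ with the scalar iteration $v_{N_1}=u_{N_1}$, $v_{n+1}=\overline F(v_n)$ gives $u_n\le v_n\to y_2$, so $\limsup_n u_n\le y_2$. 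Symmetrically, $u_{n+1}\ge\underline F(u_n)$ and comparison with $w_{N_1}=u_{N_1}$, $w_{n+1}=\underline F(w_n)$ gives $u_n\ge w_n\to y_1$, so $\liminf_n u_n\ge y_1$. These two bounds produce, for any $\varepsilon>0$, an $n_0\ge N_1$ beyond which $u_n\in(y_1-\varepsilon,y_2+\varepsilon)$, which is the assertion.

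The routine parts are the cobweb convergence $v_n\to y_2$, $w_n\to y_1$ and the inductive monotone comparison, both resting solely on $\overline F,\underline F$ being increasing self-maps of $(0,b)$ with a single attracting interior fixed point. The one genuine obstacle is controlling $u_n$ while it is above $b$, where $f$ need not be monotone (the quail model \eqref{eq:milton} is bimodal); the inequality $f(x)/x<f(b)/b$ for $x>b$ in Assumption~\ref{as:slope} is exactly what forces the geometric contraction $u_{n+1}<q\,u_n$ and so guarantees entry into $(0,b)$ with no unimodality hypothesis needed.
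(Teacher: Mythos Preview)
Your argument is correct and follows the same skeleton as the paper's proof: both first force $u_n=\nu_n x_n$ into $(0,b)$ via the contraction $u_{n+1}\le(\mu_1 f(b)/b)\,u_n$ on $\{u_n>b\}$ together with the invariance $\mu_1 f(b)<b$, and then pin $u_n$ between $\Phi^{-1}(\mu_2)$ and $\Phi^{-1}(\mu_1)$. The only difference is packaging: the paper carries out the second stage by explicit step-by-step geometric estimates (e.g.\ $u_{n+1}\le\lambda^{*} u_n$ with $\lambda^{*}=\mu_1 f(y_2+\varepsilon)/(y_2+\varepsilon)<1$ while $u_n>y_2+\varepsilon$, plus a symmetric growth estimate below $y_1-\varepsilon$, and direct invariance checks at each stage), whereas you sandwich $u_n$ between the iterates of the autonomous increasing maps $\overline F=\mu_1 f$ and $\underline F=\mu_2 f$ and invoke their global attractivity on $(0,b)$---a cleaner route through exactly the same inequalities.
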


\begin{proof}
Define
\[
y_1:=\Phi^{-1}(\mu_1), \quad y_2:=\Phi^{-1}(\mu_2).
\]
Lemma \ref{lem:phi-1} implies that, as $\Phi(0)<\mu_2<\mu_1<\Phi(b)$, we have 
\[
0<y_2<y_1<b.
\]
By \eqref{eq:mu_bounds}  and \eqref{eq:nu_bounds},  for each $n\in \mathbb N$, 
\begin{equation}
\label{5}
\Phi(0) < \Phi(y_2) \leq \nu_n \leq \Phi(y_1) <  \Phi(b).
\end{equation}

The proof consists of three main steps. First, we prove that for any $x_0>0$ there is 
an $n_1 \in {\mathbb N}$ such that $\nu_n x_n \in (0,b)$ for $n \geq n_1$. 
Next,  we show that for   any $\varepsilon\in \left( 0, b-y_1\right)$, 
there is  an  $n_2(\varepsilon) \in {\mathbb N}$, $n_2(\varepsilon)>n_1$,  such that  $\nu_n x_n \in (0,y_1+\varepsilon)$, $n \geq n_2(\varepsilon)$.  And, finally, we verify that for   any $\varepsilon\in \left( 0, y_1\right)$, there is  an  $n_3 \in {\mathbb N}$, $n_3>n_2(\varepsilon)$,   such that  $\nu_n x_n \in (y_2-\varepsilon,y_1+\varepsilon)$, $n \geq n_3$.

First, assume that $ \nu_n x_n \geq b$ for some $n \in {\mathbb N}$. Then, \eqref{eq:mu_bounds},
\eqref{eq:nu_bounds} and  Assumption \ref{as:slope} imply $f(\nu_n x_n)/(\nu_n x_n) \leq f(b)/b$ and  
\begin{eqnarray*}
\nu_{n+1} x_{n+1} & = &  \nu_{n+1} f\left( \nu_n x_n \right) 
= \nu_{n+1} \nu_n x_n \frac{f\left( \nu_n x_n \right)}{\nu_n x_n} \\ & \leq & \nu_{n+1}  \nu_n x_n \frac{f(b)}{b} 
\leq   \mu_1 \frac{f(b)}{b} \nu_n  x_n =   \lambda  \nu_n x_n, 
\end{eqnarray*}
where 
$$
\lambda := \mu_1 \, \frac{f(b)}{b} \in (0,1).
$$
We obtained that the positive sequence $\nu_j x_j$, $j \geq n$  does not exceed a geometric sequence:
$\nu_j x_j \leq \lambda^{j-n} \nu_n x_n$ with $\lambda \in (0,1)$, as long as $\nu_{j-1} x_{j-1} \geq b$. If $\nu_k x_k > b$ for the subsequent $k=n,n+1, \dots$,  we get $\nu_j x_j < b$ for
$j>n+[(\ln b - \ln(\nu_n x_n))/\ln \lambda]+1$, where $[t]$ is the integer part of $t$.
Thus for some $n_1 \in {\mathbb N}$,  $x_{n_1}$ satisfies $\nu_{n_1} x_{n_1} < b$. 

Further, to verify that $\nu_j x_j < b$ for any $j \geq 
n_1$,  we note that, by \eqref{5}, $\displaystyle \frac{\Phi(y_1)}{\Phi(b)}=\frac{y_1}{f(y_1)} \frac{f(b)}{b} <1$ and also $\nu_{j+1} \leq \mu_1$. Then, if $\nu_j x_j < b$,  we have
\begin{eqnarray*}
\nu_{j+1} x_{j+1} & = &  \nu_{j+1} f\left( \nu_j x_j\right) \leq \mu_1 f\left( \nu_j x_j\right)\\ &=& \frac{y_1}{f(y_1)} f\left( \nu_j 
x_j\right) < \frac{y_1}{f(y_1)} f(b) = \frac{y_1}{f(y_1)}\, \frac{f(b)}{b}\, b <b.
\end{eqnarray*}
Thus, once $\nu_{n_1} x_{n_1} < b$, all $\nu_j x_j < b$, $j \geq n_1$.

Second, assuming $\nu_n x_n \in (y_1, b)$, we have by \eqref{5} 
$$
\nu_{n+1} x_{n+1} = \nu_{n+1} \nu_n x_n \frac{f\left( \nu_n x_n \right)}{\nu_n x_n} \leq \nu_{n+1} \nu_n x_n \frac{f(y_1)}{y_1} <
\frac{y_1}{f(y_1)} \nu_n  x_n \frac{f(y_1)}{y_1} = \nu_n x_n.
$$
Thus, once some $\nu_n x_n \in (y_1,b)$, we have $\nu_{n+1} x_{n+1} < \nu_n x_n < b$ as well. 

Next, let us take some $\varepsilon\in (0, b-y_1)$.  In the previous computation we apply 
$y_1+\varepsilon$ instead of $y_1$. Then, once $\nu_n x_n \geq y_1+\varepsilon$, we have $x_{n+1} \leq \lambda^{\ast} x_n$, with
$$ 
\lambda^{\ast} :=  \frac{f(y_1+\varepsilon)}{y_1+\varepsilon}\, \frac{y_1}{f(y_1)} = \mu_1 \,
\frac{f(y_1+\varepsilon)}{y_1+\varepsilon} \in (0,1) \,.  
$$
If $\nu_k x_k \geq y_1+\varepsilon$ for the subsequent $k=n,n+1, \dots$  then $\nu_j x_j < y_1 + \varepsilon$ for 
$j>n+[(\ln(y_1+\varepsilon)-\ln(\nu_n 
x_n))/\ln(\lambda^{\ast})]+1$. Thus $\nu_n x_n < y_1 + \varepsilon$ for any $\varepsilon>0$ and 
$n \geq n_2(\varepsilon) \in {\mathbb N}$, for some $n_2(\varepsilon)>n_1$.
 
Let us justify, that, if $\nu_n x_n \leq y_1+\varepsilon<b$, all the subsequent $x_k$ also
satisfy this inequality. If  $\nu_n x_n \leq y_1+\varepsilon<b$ then, by monotonicity of $f$ on $[0,y_1+\varepsilon]$  and since now $\mu_1=\Phi(y_1)<\Phi(y_1+\varepsilon)$, we have
$$
\nu_{n+1} x_{n+1} = \nu_{n+1} f(\nu_n x_n) \leq \nu_{n+1} f(y_1+\varepsilon) < \frac{\nu_{n+1} (y_1+\varepsilon)}{\mu_1} \leq y_1+\varepsilon,
$$
as $\nu_{n+1} \leq \mu_1$. 

Finally, for any $\varepsilon \in (0,y_2)$, if $\nu_n x_n<y_2-\varepsilon$, we obtain
$$
\nu_{n+1} x_{n+1} = \nu_{n+1} \nu_n x_n \frac{f\left( \nu_n x_n \right)}{\nu_n x_n} \geq 
\nu_{n+1} \nu_n x_n \frac{f(y_2-\varepsilon)}{y_2-\varepsilon} \geq
\mu_2 \nu_n x_n \frac{f(y_2 - \varepsilon)}{y_2 - \varepsilon}= \lambda_1 \nu_n x_n,
$$
where
$$
\lambda_1 := \mu_2 \frac{f(y_2 - \varepsilon)}{y_2 - \varepsilon} \in (1,\infty).
$$
Thus, $\nu_{n+1} x_{n+1} \geq \lambda_1 \nu_n x_n$ as long as $\nu_n x_n<y_2-\varepsilon$.
Hence, as in the second part of the proof, there is $N_2(\varepsilon) \in {\mathbb N}$ 
such that $\nu_n x_n > y_2- \varepsilon$ for $n \geq N_2(\varepsilon)$.
If $\nu_n x_n \in (y_2- \varepsilon,b]$ then, as $f$ is monotone on $[0,b]$ and by \eqref{5},
$$
\nu_{n+1} x_{n+1} = \nu_{n+1} f(\nu_n x_n) \geq \mu_2 \frac{f(y_2- \varepsilon)}{y_2- \varepsilon} \,
(y_2- \varepsilon) > \mu_2 \frac{y_2- \varepsilon}{\mu_2} = y_2- \varepsilon.
$$
Thus, for any $x_0>0$ and $\varepsilon>0$, for $n\geq n_3 > \max\{ N_2(\varepsilon),n_2(\varepsilon)\}$  we have $\nu_n x_n \in 
(y_2-\varepsilon,y_1+\varepsilon)$, which concludes the proof.
\end{proof}

\begin{remark}
In Lemma~\ref{lem:multi} we actually proved that,  for small \, $\varepsilon>0$,  \, 
once $\nu_n x_n\in (y_1-\varepsilon, y_2+\varepsilon)$, 
all subsequent $\nu_{n+j} x_{n+j}$, $j \in {\mathbb N}$, are also in this interval. 
This is also true for the results based on Lemma~\ref{lem:multi}, in particular, 
for Lemma~\ref{lem:multistoch} and Theorem \ref{thm:mainmult}.
\end{remark}

\subsection{Stochastic multiplicative perturbations.}
\label{subsec:stochmulti}

Now \, we \, proceed \, to \, stochastic \, equation~\eqref{3}.  
First, we prove a lemma which can be treated as a stochastic version of Lemma \ref{lem:multi} and can be  easily deduced from it.

\begin{lemma}
\label{lem:multistoch}
Suppose that Assumptions~\ref{as:slope} and~\ref{as:chibound} hold, 
$\Phi$ is defined as in \eqref{def:Phi},  $x^*\in (f(0), f(b))$, $\nu=\nu(x^*)$ is defined as in \eqref{def:nux*},
and $\ell\in \mathbb R$ satisfies the inequality
\begin{equation}
\label{def:alphal}
0< \ell <\min\left\{\Phi(b)-\nu, \, \, \nu -  \Phi(0) \right\} .
\end{equation}
Let $x_n$ be a solution to equation~\eqref{3} with $\nu$  satisfying \eqref{def:nux*}, $\ell$ 
satisfying \eqref{def:alphal} and an arbitrary initial value $x_0>0$. 
Then, for any $\varepsilon>0$ there is $N_0 \in {\mathbb N}$ such that, for all $n\ge N_0$,  
\[
\left( \nu+ \ell \chi_{n+1} \right)  x_n \in \left(\Phi^{-1}(\nu-\ell)-\varepsilon, 
\Phi^{-1}(\nu+\ell)+\varepsilon\right), \mbox{~~a.s.}
\]
\end{lemma}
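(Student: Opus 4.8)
The plan is to deduce this statement directly from the deterministic Lemma~\ref{lem:multi}, using only that the noise is bounded. First I would set $\mu_1 := \nu + \ell$ and $\mu_2 := \nu - \ell$ and verify that this pair satisfies the hypotheses \eqref{eq:mu_bounds}. By Lemma~\ref{lem:fixg}(ii) we have $\nu \in (\Phi(0), \Phi(b))$, and the bound \eqref{def:alphal} gives $\mu_1 = \nu + \ell < \Phi(b)$ and $\mu_2 = \nu - \ell > \Phi(0)$; since $\ell > 0$ also $\mu_2 < \nu < \mu_1$, and $\mu_1 > \nu > \Phi(0)$. Hence $\Phi(0) < \mu_2 < \mu_1 < \Phi(b)$, so $\mu_1 \in (\Phi(0), \Phi(b))$ and $\mu_2 \in (\Phi(0), \mu_1)$, which is exactly \eqref{eq:mu_bounds}.

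Next I would introduce the random controls $\nu_n := \nu + \ell \chi_{n+1}$, so that equation \eqref{3} becomes \eqref{eq:var} pathwise. By Assumption~\ref{as:chibound} the density of each $\chi_{n+1}$ vanishes outside $[-1,1]$, so $\{\nu_n \in [\mu_2, \mu_1]\}$ is an almost sure event for every fixed $n$; intersecting these countably many events yields a set $\Omega^{\ast}$ with $\mathbb{P}(\Omega^{\ast}) = 1$ on which \eqref{eq:nu_bounds} holds for all $n$ at once. On $\Omega^{\ast}$ each realization of $(\nu_n)_{n}$ is an admissible control sequence for Lemma~\ref{lem:multi}, so for the given $x_0 > 0$ and any $\varepsilon > 0$ that lemma furnishes an index past which $\nu_n x_n \in (\Phi^{-1}(\mu_2) - \varepsilon, \Phi^{-1}(\mu_1) + \varepsilon)$. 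Rewriting $\Phi^{-1}(\mu_2) = \Phi^{-1}(\nu - \ell)$, $\Phi^{-1}(\mu_1) = \Phi^{-1}(\nu + \ell)$ and $\nu_n x_n = (\nu + \ell \chi_{n+1}) x_n$ then produces the claimed inclusion.

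The hard part will be justifying that the threshold $N_0$ can be taken \emph{nonrandom}, since the index delivered by Lemma~\ref{lem:multi} a priori depends on the full control sequence, which here is random. I would argue that it is in fact uniform over all realizations in $[\mu_2, \mu_1]^{\mathbb{N}}$: the three rate constants $\lambda$, $\lambda^{\ast}$, $\lambda_1$ in the proof of Lemma~\ref{lem:multi} depend only on $\mu_1, \mu_2, \varepsilon$ and $f$, not on the individual values $\nu_n$. Moreover the transient is controlled by path-independent quantities, namely $\nu_0 x_0 \geq \mu_2 x_0 > 0$ at the start, and $\nu_n x_n \geq \mu_2 f(\Phi^{-1}(\mu_1) + \varepsilon) > 0$ at the first moment $\nu_n x_n$ drops below $\Phi^{-1}(\mu_1) + \varepsilon$, which bounds the number of expansion steps needed to climb back above $\Phi^{-1}(\mu_2) - \varepsilon$. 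Consequently the length of each phase is dominated by a deterministic function of $x_0$ and $\varepsilon$ alone, so a single nonrandom $N_0 = N_0(\varepsilon, x_0)$ works on all of $\Omega^{\ast}$, which finishes the proof.
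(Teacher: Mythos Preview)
Your proposal is correct and follows essentially the same route as the paper: check via Lemma~\ref{lem:fixg}(ii) and \eqref{def:alphal} that $\nu_n := \nu + \ell\chi_{n+1}$ lies a.s.\ in $[\nu-\ell,\nu+\ell]\subset(\Phi(0),\Phi(b))$, then invoke the deterministic Lemma~\ref{lem:multi} with $\mu_1=\nu+\ell$, $\mu_2=\nu-\ell$. You actually go further than the paper, which simply appeals to Lemma~\ref{lem:multi} without commenting on why the resulting threshold is nonrandom; your observation that the rates $\lambda,\lambda^{\ast},\lambda_1$ and the step-count bounds in that proof depend only on $\mu_1,\mu_2,\varepsilon,x_0$ and not on the particular realization $(\nu_n)$ is exactly what makes $N_0$ uniform over $\Omega^{\ast}$.
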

\begin{proof}
From  Lemma \ref{lem:fixg} we conclude that
\[
\nu(x^*)=\Phi (f^{-1}(x^*))\in \left(\Phi(0), \, \Phi(b)\right).
\]
So the right-hand side of  \eqref{def:alphal} is positive.  With $\nu$ defined in \eqref{def:nux*} and 
$\ell$ satisfying  \eqref{def:alphal}, we have, a.s., 
$$\nu_n=\nu+\ell \chi_{n+1} \leq \nu+\ell$$
and
$$
\nu_n=\nu+\ell \chi_{n+1} \geq \nu - \ell,$$
so   $\nu_n=\nu+ \ell \chi_{n+1} \in (\nu-\ell,\nu+\ell)$, a.s.

Now we let $\mu_1:= \nu-\ell$, $\mu_2:= \nu+\ell$ and apply Lemma~\ref{lem:multi}.
\end{proof}

Lemma \ref{lem:multistoch} implies the main result of this section, which states that for each 
$x^*\in (0, f(b))$ we can find a control $\nu$ and a noise level $\ell$, such that  the solution 
eventually reaches and stays in some  interval, defined by $\nu$ and $\ell$, a.s.

\begin{theorem}
\label{thm:mainmult}
Let Assumptions~\ref{as:chibound},~\ref{as:slope} hold, $\Phi$  be defined as in \eqref{def:Phi}, $x^*\in (f(0), f(b))$ be an arbitrary point, 
$\nu=\nu(x^*)$ be defined as in \eqref{def:nux*}, $x_0>0$ and $\ell\in \mathbb R$ satisfy inequality 
\eqref{def:alphal}. Then for the solution $x_n$ of equation~\eqref{3}, the following statements are valid, a.s.
\begin{enumerate}
\item [(i)] For each $\delta>0$ there exists a nonrandom $N=N(\delta)\in \mathbb N$ such that, for all $n\ge N$, 
\begin{equation*}
\label{eq:intmultn}
x_n\in\left (\frac{\Phi^{-1}(\nu-\ell)} {\nu+\ell}-\delta, \,\,  \frac{\Phi^{-1}(\nu+\ell)} {\nu-\ell}+\delta\right).
\end{equation*}
\item [(ii)]
$\displaystyle
\liminf_{n \to \infty} x_n \geq  \frac{\Phi^{-1}(\nu-\ell)} {\nu+\ell}, \quad \limsup_{n \to \infty} x_n \leq \frac{\Phi^{-1}(\nu+\ell)} {\nu-\ell}.
$
\end{enumerate}
\end{theorem}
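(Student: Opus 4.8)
The plan is to transfer the almost sure control of the product $(\nu+\ell\chi_{n+1})x_n$ established in Lemma~\ref{lem:multistoch} into control of $x_n$ itself, by dividing through by the random factor $\nu+\ell\chi_{n+1}$ and exploiting that this factor is trapped in $[\nu-\ell,\nu+\ell]$ almost surely. The dynamical substance has already been absorbed into Lemmas~\ref{lem:multi} and~\ref{lem:multistoch}, so the remaining work is essentially a change of variable together with a matching of $\varepsilon$ against $\delta$.

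First I would record that, by Assumption~\ref{as:chibound}, $\chi_{n+1}\in[-1,1]$ a.s., so $\nu+\ell\chi_{n+1}\in[\nu-\ell,\nu+\ell]$ a.s., and since \eqref{def:alphal} gives $\ell<\nu-\Phi(0)\le\nu$, this factor is strictly positive a.s. Writing $x_n=\frac{(\nu+\ell\chi_{n+1})x_n}{\nu+\ell\chi_{n+1}}$, I can bound $x_n$ on both sides once the numerator is pinned down: to minimize the quotient I divide the smallest admissible numerator by the largest denominator $\nu+\ell$, and to maximize it I divide the largest numerator by the smallest denominator $\nu-\ell$. The only point requiring care is that, for the lower bound, the numerator must be kept positive so that dividing preserves the inequality direction; this is arranged by taking $\varepsilon$ small.

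Concretely, I would fix $\delta>0$ and choose $\varepsilon\in\left(0,\Phi^{-1}(\nu-\ell)\right)$ small enough that $\frac{\varepsilon}{\nu-\ell}<\delta$; this simultaneously guarantees $\Phi^{-1}(\nu-\ell)-\varepsilon>0$ and $\frac{\varepsilon}{\nu+\ell}<\delta$, since $\nu-\ell<\nu+\ell$. Applying Lemma~\ref{lem:multistoch} with this $\varepsilon$ yields a nonrandom $N_0$ such that $(\nu+\ell\chi_{n+1})x_n\in\left(\Phi^{-1}(\nu-\ell)-\varepsilon,\Phi^{-1}(\nu+\ell)+\varepsilon\right)$ for all $n\ge N_0$, a.s. Dividing by the trapped factor as described gives, for $n\ge N:=N_0$,
\[
\frac{\Phi^{-1}(\nu-\ell)-\varepsilon}{\nu+\ell}<x_n<\frac{\Phi^{-1}(\nu+\ell)+\varepsilon}{\nu-\ell},\qquad\text{a.s.}
\]
By the choice of $\varepsilon$, the left endpoint exceeds $\frac{\Phi^{-1}(\nu-\ell)}{\nu+\ell}-\delta$ and the right endpoint lies below $\frac{\Phi^{-1}(\nu+\ell)}{\nu-\ell}+\delta$, which establishes (i).

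Part (ii) then follows immediately: the displayed two-sided bound holds for every $\varepsilon>0$ (with $N_0$ depending on $\varepsilon$), so taking $\liminf_{n\to\infty}$ and $\limsup_{n\to\infty}$ of the respective inequalities and letting $\varepsilon\to 0^+$ produces the claimed bounds $\liminf_{n\to\infty}x_n\ge\frac{\Phi^{-1}(\nu-\ell)}{\nu+\ell}$ and $\limsup_{n\to\infty}x_n\le\frac{\Phi^{-1}(\nu+\ell)}{\nu-\ell}$. I do not anticipate a genuine obstacle here; the one structural ingredient that makes the argument work is the boundedness of the noise in Assumption~\ref{as:chibound}, which is exactly what keeps $\nu+\ell\chi_{n+1}$ bounded away from zero almost surely and thus legitimizes the division.
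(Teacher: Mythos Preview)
Your proposal is correct and follows essentially the same route as the paper: fix $\delta$, choose $\varepsilon<\delta(\nu-\ell)$, invoke Lemma~\ref{lem:multistoch}, and then use $\nu+\ell\chi_{n+1}\in[\nu-\ell,\nu+\ell]$ together with $x_n\ge 0$ to pass from bounds on $(\nu+\ell\chi_{n+1})x_n$ to bounds on $x_n$. Your extra care that $\varepsilon<\Phi^{-1}(\nu-\ell)$ is harmless but not actually needed, since the paper argues via $(\nu+\ell\chi_{n+1})x_n\le(\nu+\ell)x_n$ (using $x_n\ge 0$), which gives the lower bound on $x_n$ regardless of the sign of $\Phi^{-1}(\nu-\ell)-\varepsilon$.
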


\begin{proof}
Note that condition \eqref{def:alphal} implies $\nu>\ell$.
Fix $\delta>0$ and take some $\varepsilon>0$ satisfying 
\[
\varepsilon<\delta (\nu-\ell).
\]
By Lemma~\ref{lem:multistoch}, for any $x_0>0$ and $\varepsilon>0$, there is $N_0=N_0(\varepsilon)\in \mathbb N$ such that, a.s.,  
$$
(\nu+ \ell \chi_{n+1}) x_n > \Phi^{-1}(\nu-\ell)-\varepsilon, \quad (\nu+ \ell \chi_{n+1}) x_n 
<\Phi^{-1}(\nu+\ell) +\varepsilon, \quad n \geq N_0.
$$
Since $x_n\ge 0$ and $\nu+ \ell \chi_{n+1}\in (\nu-l, \nu+l)$ we have, a.s.,
\[
\Phi^{-1}(\nu-\ell)-\varepsilon<(\nu+ \ell \chi_{n+1}) x_n\le (\nu+ \ell) x_n
\]
and 
\[
\Phi^{-1}(\nu+\ell)+\varepsilon>(\nu+ \ell \chi_{n+1}) x_n\ge (\nu- \ell) x_n.
\]
Therefore, for $n \geq N_0$ we get, a.s.,
\begin{equation}
\label{ineq:Phie}
\frac{\Phi^{-1}(\nu-\ell)-\varepsilon}{\nu+\ell}<x_n < \frac{\Phi^{-1}(\nu+\ell) +\varepsilon}{\nu-\ell}.
\end{equation}
Since
\[
\frac \varepsilon{v+\ell}<\delta, \quad \frac \varepsilon{v-\ell}<\delta,
\]
from inequality \eqref{ineq:Phie} we obtain  (i).

As $\delta>0$ in (i) is arbitrary,  (i) immediately implies (ii).
\end{proof}

Theorem~\ref{prop:liml0} below deals with the situation when the noise level $\ell$ can be 
chosen arbitrarily small.  It confirms the intuitive feeling that, as the noise level $\ell$ is getting smaller, the solution of  stochastic equation \eqref{2}  behaves similarly to the solution of correspondent deterministic equation \eqref{2}  in terms of  approaching its stable equilibrium $x^*$.

\begin{theorem}
\label{prop:liml0}
Let Assumptions~\ref{as:slope} and~\ref{as:chibound} hold, $x^*\in (f(0), f(b))$ be an arbitrary point,  
$\nu=\nu(x^*)$ be defined as in \eqref{def:nux*}, and $x_0>0$  be an  arbitrary   initial value.  
Then for any   $\varepsilon>0$,  there exists the level of noise $\ell(\varepsilon)>0$ such that 
for each $\ell <\ell (\varepsilon)$, there is a positive integer $N_1=N_1(\varepsilon, \ell, x_0)$ such that for the solution  
$x_n$  of the equation~\eqref{3} with $n\ge N_1$ we have $x_n\in (x^*-\varepsilon, x^*+\varepsilon)$, a.s. 
\end{theorem}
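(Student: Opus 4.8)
The plan is to deduce this theorem directly from Theorem~\ref{thm:mainmult}, combined with the continuity of $\Phi^{-1}$ and a single algebraic identity. First I would record that identity. By \eqref{def:nux*} and the computation carried out in the proof of Lemma~\ref{lem:fixg}, we have $\nu=\nu(x^*)=f^{-1}(x^*)/x^*$, hence $\Phi^{-1}(\nu)=f^{-1}(x^*)$ and therefore
\[
\frac{\Phi^{-1}(\nu)}{\nu}=\frac{f^{-1}(x^*)}{f^{-1}(x^*)/x^*}=x^*.
\]
Thus the common limit of the two endpoints appearing in Theorem~\ref{thm:mainmult} is exactly the point $x^*$ we wish to stabilize.

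Next I would regard those two endpoints as functions of the noise level $\ell$. As $\ell$ ranges over the admissible interval $\bigl(0,\min\{\Phi(b)-\nu,\ \nu-\Phi(0)\}\bigr)$ dictated by \eqref{def:alphal}, the arguments $\nu\pm\ell$ remain inside $(\Phi(0),\Phi(b))$, the domain on which $\Phi^{-1}$ is continuous by Lemma~\ref{lem:phi-1}(iii); moreover $\nu-\ell>0$ since $\Phi(0)\ge 0$, so no denominator vanishes. Consequently both maps
\[
\ell\mapsto\frac{\Phi^{-1}(\nu-\ell)}{\nu+\ell},\qquad \ell\mapsto\frac{\Phi^{-1}(\nu+\ell)}{\nu-\ell}
\]
are continuous at $\ell=0$ and, by the displayed identity, both tend to $x^*$ as $\ell\to 0^+$.

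Given $\varepsilon>0$, I would then use this convergence to choose $\ell(\varepsilon)>0$, also respecting \eqref{def:alphal}, so small that for every $\ell<\ell(\varepsilon)$,
\[
x^*-\frac{\varepsilon}{2}<\frac{\Phi^{-1}(\nu-\ell)}{\nu+\ell},\qquad \frac{\Phi^{-1}(\nu+\ell)}{\nu-\ell}<x^*+\frac{\varepsilon}{2}.
\]
Finally I would apply Theorem~\ref{thm:mainmult}(i) with $\delta=\varepsilon/2$ to produce a nonrandom $N_1=N(\varepsilon/2)$ such that, a.s., for all $n\ge N_1$ the solution $x_n$ lies in $\bigl(\Phi^{-1}(\nu-\ell)/(\nu+\ell)-\varepsilon/2,\ \Phi^{-1}(\nu+\ell)/(\nu-\ell)+\varepsilon/2\bigr)$, which by the previous display is contained in $(x^*-\varepsilon,x^*+\varepsilon)$, as required. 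I do not expect a genuine obstacle here, since the substance is already furnished by Theorem~\ref{thm:mainmult}; the only points requiring care are selecting $\ell(\varepsilon)$ small enough to satisfy both endpoint estimates simultaneously while still meeting the admissibility condition \eqref{def:alphal}, and noting that the resulting $N_1$ depends on $\varepsilon$, $\ell$ and $x_0$ exactly through the invocation of Theorem~\ref{thm:mainmult}.
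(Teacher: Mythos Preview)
Your proposal is correct and follows essentially the same route as the paper: both arguments reduce the theorem to Theorem~\ref{thm:mainmult} together with the continuity of $\Phi^{-1}$ and the identity $\Phi^{-1}(\nu)/\nu=x^*$, then split $\varepsilon$ into two halves to accommodate the endpoint error and the $\delta$-slack. The only cosmetic difference is that the paper returns to inequality \eqref{ineq:Phie} from the proof of Theorem~\ref{thm:mainmult} and carries the auxiliary $\delta$ inside the numerator (leading to the explicit choice $\delta=\tfrac{\varepsilon}{2}\,f^{-1}(x^*)/x^*$), whereas you invoke part~(i) of Theorem~\ref{thm:mainmult} directly with $\delta=\varepsilon/2$; your packaging is slightly cleaner but the substance is identical.
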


\begin{proof}
For each $\delta>0$,
$x^*\in (f(0), f(b))$, $\nu(x^*)=\Phi(f^{-1}(x^*))$ and $\ell$ satisfying \eqref{def:alphal}, the 
solution  $x_n$ of equation \eqref{3} with an arbitrary initial value  $x_0>0$ satisfies inequality \eqref{ineq:Phie} for $n 
\geq N_0=N_0 (\delta,\ell, x_0)$, which can be written as  
\begin{equation*}
\frac{\Phi^{-1}(\Phi(f^{-1}(x^*))-\ell)-\delta}{\nu+l}<x_n < \frac{\Phi^{-1}(\Phi(f^{-1}(x^*))+\ell) +\delta}{\nu-l}.
\end{equation*}
By continuity of $\Phi$ we have
\[
\lim_{\ell\to 0} 
\frac{\Phi^{-1}(\Phi(f^{-1}(x^*))-\ell)}{\Phi(f^{-1}(x^*))+\ell}=\frac{\Phi^{-1}(\Phi(f^{-1}(x^*)))}{\Phi(f^{-1}(x^*))}
=\frac{f^{-1}(x^*)}{\frac{f^{-1}(x^*)}{x^*}}=x^*
\]
and
\[
\lim_{\ell\to 0} 
\frac{\Phi^{-1}(\Phi(f^{-1}(x^*))+\ell)}{\Phi(f^{-1}(x^*))-\ell}=\frac{\Phi^{-1}(\Phi(f^{-1}(x^*)))}{\Phi(f^{-1}(x^*))}=\frac{f^{-1}(x^*)}{\frac{f^{-1}(x^*)}{x^*}}=x^*.
\]
Then, for each $\delta>0$, we obtain
\begin{equation}
\label{ineq:Phie2}
\lim_{\ell\to 0} 
\frac{\Phi^{-1}(\Phi(f^{-1}(x^*))-\ell)-\delta}{\Phi(f^{-1}(x^*))+\ell}
=\frac{f^{-1}(x^*)-\delta}{\frac{f^{-1}(x^*)}{x^*}}=x^*-\delta 
\frac {x^*}{f^{-1}(x^*)} \end{equation}
and
\begin{equation}
\label{ineq:Phie3}
\lim_{\ell\to 0} 
\frac{\Phi^{-1}(\Phi(f^{-1}(x^*))+\ell)+\delta}{\Phi(f^{-1}(x^*))-\ell}
=\frac{f^{-1}(x^*)+\delta}{\frac{f^{-1}(x^*)}{x^*}}=x^*+\delta 
\frac {x^*}{f^{-1}(x^*)} . 
\end{equation}

Fix some $\varepsilon>0$ as in the statement of the theorem and take
\begin{equation}
\label{def:deltax*}
\delta=\frac \varepsilon 2 \, \, \frac {f^{-1}(x^*)}{x^*}.
\end{equation}
Applying \eqref{ineq:Phie2} and \eqref{ineq:Phie3},
for $\delta$ defined by \eqref{def:deltax*} we can find $\ell(\varepsilon)>0$ such that, for $\ell<\ell(\varepsilon)$, both inequalities below hold:
\begin{equation}
\label{ineq:Phie23}
\begin{split}
&\frac{\Phi^{-1}(\Phi(f^{-1}(x^*))-\ell)-\delta}{\Phi(f^{-1}(x^*))+\ell}>x^*-\delta \frac {x^*}{f^{-1}(x^*)}-\frac \varepsilon 
2=x^*-\varepsilon, \\
& \frac{\Phi^{-1}(\Phi(f^{-1}(x^*))+\ell)+\delta}{\Phi(f^{-1}(x^*))-\ell}<x^*+\delta \frac {x^*}{f^{-1}(x^*)}+\frac \varepsilon 
2=x^*+\varepsilon.
\end{split}
\end{equation}
Fix some $\ell<\ell(\varepsilon)$ and find,  for $\ell$ and $\delta$ (defined by 
\eqref{def:deltax*}),  a   number $N_1=N_1(\varepsilon, \ell, x_0)$ such that, for all $n\ge N_1$,
\begin{equation}
\label{ineq:Phie3a}
\frac{\Phi^{-1}(\Phi(f^{-1}(x^*))-\ell)-\delta}{\Phi(f^{-1}(x^*))+\ell}<x_n 
< \frac{\Phi^{-1}(\Phi(f^{-1}(x^*))+\ell) +\delta}{\Phi(f^{-1}(x^*))-\ell}.
\end{equation}
By applying inequalities \eqref{ineq:Phie23}  to inequality \eqref{ineq:Phie3a} we arrive at
\begin{equation*}
x^*-\varepsilon<x_n <x^*+\varepsilon, \quad n\ge N_1,
\end{equation*}
which concludes the proof.
\end{proof}

\section{Additive Perturbations}
\label{sec:add}

In this section we investigate similar problems for the stochastic equation with additive perturbations, i.e. 
equation \eqref{4}, where $f$ satisfies Assumption \ref{as:slope}. 

Our purpose remains the same: to pseudo-stabilize any point $x^*\in \bigl(f(0), f(b)\bigr)$.  
For each $x^*\in \bigl(f(0), f(b)\bigr)$,  we establish the control $\nu=\nu(x^*)$ and the interval such that 
a solution remains in this interval, once the noise level  $\ell$ is small enough.  For any 
$x^*\in \bigl(f(0), f(b)\bigr)$, the same $\nu=\Phi(f^{-1}(x^*))$ as in the previous section will work. 


\subsection{Deterministic additive perturbations}
\label{subsec:detadd}

Before \, considering \, stochastic difference equation \eqref{4} with an additive noise, 
we first study a deterministic model with variable perturbations
\begin{equation}
\label{4var}
x_{n+1}= \max\left\{ f\left(\nu x_n \right) + r_n, 0\right\}, \quad r_n \in {\mathbb R}, \quad x_0>0, \quad n\in \mathbb N,
\end{equation}
as well as its version without a proportional feedback
\begin{equation}
\label{4var_1}
x_{n+1}= \max\left\{ g(x_n) + r_n, 0\right\}, \quad r_n \in {\mathbb R}\quad x_0>0, \quad n\in \mathbb N.
\end{equation}
The function $g$ can also be viewed as a function in which a control has already 
been incorporated, for example, $g(x)=f(\nu x)$.

\begin{assumption}
\label{as:slopeg}
Assume that the function $g : [0, \infty) \rightarrow [0, \infty)$ is continuous, and
there is a real number $c>0$ such that $g(x)$ is strictly monotone increasing, while the  function 
$\frac{g(x)}{x}$ is strictly monotone decreasing on $(0,c]$, $\frac{g(c)}{c} > \frac{g(x)}{x}$ 
for any $x>c$, and there exists a unique fixed point  $ x^* \in (0, c)$ 
of $g$ on $(0,\infty)$, $g(x^{\ast})=x^*$. 
\end{assumption}

\begin{remark}
\label{rem:tildex*}
 Note that Assumption~\ref{as:slopeg} and Assumption~\ref{as:slope} are the same except the conditions:
 $f(b)>b$ while $g(c)<c.$
\end{remark}

Let  $d>0$ satisfy  
\begin{equation}
\label{add1}
d < \min\left\{ c-g(c), \max_{x\in [0, x^{\ast}]} \left[ g(x)-x\right] \right\}. 
\end{equation} 
Introduce the numbers $y_1$, $y_2$ and $y_3$ as follows:\\
\begin{equation}
\label{add3}
\begin{split}
y_1 :=& \sup \left\{ \left. x \in [0, x^{\ast}] \right| g(x)-x \geq d \right\} \in (0,  x^{\ast}),\\
y_2 :=& \inf \left\{ \left. x \in [ x^{\ast},c] \right| g(x)-x \leq - d \right\} \in ( x^{\ast},c),
\end{split}
\end{equation}
and 
\begin{equation}
\label{add4}
y_3 = \inf \left\{ \left. x \in [ x^{\ast},\infty) \right| g(x)-d \leq y_1 \right\},
\end{equation}
where we assume $y_3=\infty$, if the set in the right-hand-side of \eqref{add4} is empty; see Fig.~\ref{figure1}
for the outline of the points $y_i$, $i=1,2,3$.

\begin{figure}[ht]
\includegraphics[scale=0.8]{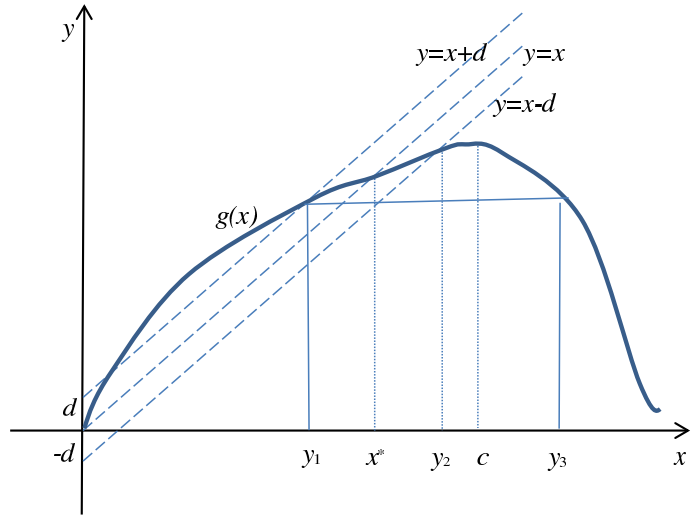}
\caption{The graph of $g(x)$ with $y_i$, $i=1,2,3$, together with the equilibrium $ x^{\ast}$ marked}
\label{figure1}
\end{figure}

\begin{lemma}
\label{lem:y123}
Suppose Assumption \ref{as:slopeg}  holds and  $d$ satisfies \eqref{add1}.  Then
\begin{enumerate}
\item[(i)] the numbers $y_1$, $y_2$ and $y_3$ defined by \eqref{add3} and \eqref{add4}, respectively, exist;
\item [(ii)] $y_1$ and $y_2$ satisfy
\begin{equation}
\label{add2}
0<y_1  <  x^{\ast} <y_2 < c, \quad g(y_1)-d=y_1, \quad g(y_2)+d = y_2,
\end{equation}
$y_2\in ( x^*, c)$ is the only solution of $g(x)+d=x$ on $[ x^{\ast},\infty)$, and $g(x)+d<x$ for $x>y_2$;
\item [(iii)] either finite or infinite $y_3$ satisfies $y_3>c$ and $g(x)-d \geq y_1$ for $x\in [y_1, y_3]$, if $y_3<\infty$,  and for $x\in [y_1, \infty)$ if $y_3=\infty$.
\end{enumerate}
\end{lemma}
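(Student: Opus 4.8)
The plan is to reduce everything to the continuous function $h(x) := g(x) - x$ on $[0,\infty)$, reading off the three points from its sign and monotonicity. First I would record the basic sign pattern. Writing $\phi(x) := g(x)/x$, Assumption~\ref{as:slopeg} gives that $\phi$ is strictly decreasing on $(0,c]$ with $\phi(x^*)=1$ (since $g(x^*)=x^*$), so $h(x) = x(\phi(x)-1) > 0$ on $(0,x^*)$ and $h(x) < 0$ on $(x^*,c]$, while $h(x^*)=0$; and $h$ is continuous on $[0,\infty)$.

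For the existence of $y_1$, $y_2$ and the first equalities in (ii) I would argue by the intermediate value theorem together with the defining supremum/infimum. The hypothesis $d < \max_{x\in[0,x^*]}[g(x)-x]$ makes the closed set $S_1 := \{x\in[0,x^*] : h(x)\ge d\}$ nonempty, so $y_1 = \sup S_1$ exists and lies in $S_1$; since $h(x^*)=0<d$ we get $y_1<x^*$, a short continuity argument at $0$ rules out $y_1=0$, and maximality of $y_1$ forces $h(y_1)=d$, i.e. $g(y_1)-d=y_1$. Symmetrically, $d < c-g(c)$ gives $h(c)<-d$, so $S_2 := \{x\in[x^*,c] : h(x)\le -d\}$ is nonempty and contains points arbitrarily close to $c$; hence $y_2=\inf S_2\in(x^*,c)$ and $h(y_2)=-d$, i.e. $g(y_2)+d=y_2$. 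This gives (i) together with the bounds and equalities in (ii).

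For the uniqueness claim in (ii) I would upgrade the sign information to strict monotonicity on $[x^*,c]$: for $x^*\le x_1<x_2\le c$ the factor $\phi-1$ is nonpositive and strictly decreasing while $x$ is positive and increasing, so $x(\phi(x)-1)$ strictly decreases; thus $y_2$ is the only solution of $g(x)+d=x$ on $[x^*,c]$. For $x>c$ the assumption $\phi(x)<\phi(c)=g(c)/c<1$ yields $g(x)<(g(c)/c)x$, whence $h(x) < x(g(c)/c-1) < c(g(c)/c-1) = g(c)-c < -d$; so there are no further solutions beyond $c$ and $g(x)+d<x$ holds throughout $(y_2,\infty)$, which finishes (ii).

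Finally, for (iii) I would analyze $G(x):=g(x)-d$. Since $g$ is strictly increasing on $(0,c]$ and $g(y_1)=y_1+d$ with $y_1<x^*$, I get $G(x^*)=x^*-d>y_1$; monotonicity of $G$ on $[x^*,c]$ then gives $G(x)>y_1$ on all of $[x^*,c]$, and continuity at $c$ promotes this to $y_3>c$ (trivially so when $y_3=\infty$). For the lower bound I would split $[y_1,y_3]$: on $[y_1,x^*]$ monotonicity of $g$ gives $g(x)\ge g(y_1)=y_1+d$, hence $G(x)\ge y_1$; on $[x^*,y_3)$ the definition of the infimum gives $G(x)>y_1$, and closedness of the defining set gives $G(y_3)=y_1$ when $y_3<\infty$, with the case $y_3=\infty$ identical using $[x^*,\infty)$. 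The main obstacle throughout is that $g$ increasing does not by itself make $h=g-\mathrm{id}$ monotone, so both the uniqueness of $y_2$ and the control of $h$ for $x>c$ must be extracted from the decreasing quotient $g(x)/x$ and its comparison with $g(c)/c$ rather than from $g$ alone; this is the step demanding the most care.
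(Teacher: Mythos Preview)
Your proposal is correct and follows essentially the same approach as the paper: both proofs use the intermediate value theorem together with the sup/inf definitions to locate $y_1,y_2$ and exploit the decreasing quotient $g(x)/x$ (rather than monotonicity of $g$ alone) to handle uniqueness of $y_2$ and the behavior for $x>c$, then use monotonicity of $g$ on $[0,c]$ with $g(y_1)-d=y_1$ to push $y_3$ past $c$. The only cosmetic difference is that for uniqueness on $[x^*,c]$ you show $h(x)=x(\phi(x)-1)$ is strictly decreasing, whereas the paper argues via the decreasing function $(g(x)+d)/x$; both amount to the same observation.
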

\begin{proof}
Note that $g(0) \geq 0$,  $g({x}^*)= x^*$, $g(x)-x$ is continuous and, due to \eqref{add1}, we have
$\max_{x\in [0,  x^{\ast}]} \left[ g(x)-x\right]=g(x_{\max})-x_{\max} > d$ for some $x_{\max}\in [0,  x^{\ast})$.
Thus, $g(x^*)-x^*<d$ and  there is  $x \in (0,  x^{\ast})$ such that $g(x)-x>d$, which  implies $y_1 \in (0,  x^{\ast})$. 

Further, the equations $g(x)=x-d$ and $g(x)=x+d$ have solutions on $(0,c]$.
By \eqref{add1}, $g(c)+d < c$, while  $g( x^{\ast})+d= x^{\ast}+d> x^{\ast}$.
As $g(x)+d$ is continuous, there is a point $y_2 \in ( x^{\ast},c)$ such that $g(y_2)+d=y_2$.
Moreover, $\displaystyle \frac{g(x)+d}{x}$ is a decreasing function on 
$[ x^{\ast},c]$, and for $x>c$, it is less than $(g(c)+d)/c < 1$, since 
\begin{eqnarray*}
\frac{g(x)+d}{x} & = & \frac{g(x)}{x} + \frac{d}{x} < \frac{g(c)}{c} + \frac{d}{c} \\
 & \leq & \frac{g(y_2)}{y_2} + \frac{d}{y_2}
= \frac{g(y_2)+d}{y_2}=1.
\end{eqnarray*}
Thus, there is exactly one fixed point
of $g(x)+d$ on $[ x^{\ast},\infty)$, where the function  $\displaystyle \frac{g(x)+d}{x}$ equals one, 
which is $y_2$, and $g(x)+d<x$ for $x>  x^{\ast}$. 

By \eqref{add3}, we have $y_1\in (0,  x^{\ast})$ and $g(y_1)-d =y_1$. 
Since $g$ increases and is continuous on $[0,c]$, and so is $g(x)-d$,  we have $g(x)-d > y_1$ for any $x \in (y_1,c]$, 
therefore $y_3>c$. 
Relations \eqref{add3} and \eqref{add4}  imply  $g(x)-d \geq y_1$ for $x\in [y_1, y_3]$, if $y_3<\infty$,  and for $x\in [y_1, \infty)$ if $y_3=\infty$.
\end{proof}

\begin{lemma} 
\label{lem:var_gen}
Suppose that Assumption \ref{as:slopeg} holds, $d$ satisfies \eqref{add1},
and $y_1$, $y_2$, $y_3$ are defined by \eqref{add3} and \eqref{add4}, respectively.

Let $x_n$  be a solution of \eqref{4var_1} with $-d \leq r_n \leq d$ and $x_0 \in [y_1,y_3]$.
Then
\begin{enumerate}
\item[(i)] for each $\varepsilon>0$ there exists $N=N(\varepsilon, x_0)$ such that, for $n\ge N$, 
\begin{equation}
\label{add50}
y_1\le x_n\le  y_2+\varepsilon;
\end{equation}
\item[(ii)] we have
\begin{equation}
\label{add5}
\liminf_{n \to \infty} x_n \geq y_1, \quad \limsup_{n \to \infty} x_n \leq y_2.
\end{equation}
\end{enumerate}
\end{lemma}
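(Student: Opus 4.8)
The plan is to treat \eqref{4var_1} as a one-dimensional monotone map perturbed by bounded noise and run a confinement-plus-descent argument. The key auxiliary object is the continuous function $h(x):=x-g(x)-d$, which by Lemma~\ref{lem:y123}(ii) vanishes exactly at $y_2$ on $[x^\ast,\infty)$ and is strictly positive for $x>y_2$; dually, $g(x)-d\ge y_1$ on $[y_1,y_3]$ by Lemma~\ref{lem:y123}(iii), while $g$ increases on $[0,c]$ with $g(y_2)=y_2-d$ by \eqref{add2}. These three facts control, respectively, the descent above $y_2$, the lower barrier at $y_1$, and the upper barrier at $y_2$.

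First I would confine the orbit to a compact interval $[y_1,M]$, where $M:=y_3$ if $y_3<\infty$ and $M:=\max\{x_0,y_2\}$ if $y_3=\infty$, by induction on $n$. For the lower bound, whenever $x_n\in[y_1,y_3]$ (which holds in both cases since $M\le y_3$), Lemma~\ref{lem:y123}(iii) gives $g(x_n)+r_n\ge g(x_n)-d\ge y_1>0$, so the cap at zero is inactive and $x_{n+1}\ge y_1$. For the upper bound I would split into two cases: if $x_n\le y_2$, then monotonicity of $g$ and $g(y_2)=y_2-d$ give $x_{n+1}\le g(x_n)+d\le g(y_2)+d=y_2\le M$; if $x_n>y_2$, then $h(x_n)>0$ yields $x_{n+1}\le g(x_n)+d=x_n-h(x_n)<x_n\le M$. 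Thus $x_n\in[y_1,M]$ for every $n$, which already gives the lower half of \eqref{add50} and, via $x_n\ge y_1$ for all $n$, the first inequality in \eqref{add5}.

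Next I would fix $\varepsilon>0$ and establish the trapping region $[y_1,y_2+\varepsilon]$. The same case split shows this interval is forward invariant: from $x_n\le y_2$ we get $x_{n+1}\le y_2\le y_2+\varepsilon$, and from $y_2<x_n\le y_2+\varepsilon$ we get $x_{n+1}<x_n\le y_2+\varepsilon$, while the lower barrier keeps $x_{n+1}\ge y_1$. It remains to show the orbit enters this interval in finitely many steps. On the compact set $[y_2+\varepsilon,M]$ the continuous, strictly positive function $h$ attains a positive minimum $m_\varepsilon:=\min_{[y_2+\varepsilon,M]}h>0$ (if $y_2+\varepsilon>M$ this set is empty and $x_n\le M<y_2+\varepsilon$ trivially). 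Hence, as long as $x_n\in[y_2+\varepsilon,M]$, one has $x_{n+1}\le x_n-m_\varepsilon$, so the orbit drops by at least $m_\varepsilon$ per step and must leave this set downward after at most $N:=\lceil(M-y_2-\varepsilon)/m_\varepsilon\rceil+1$ steps; by the invariance just proved it then remains in $[y_1,y_2+\varepsilon]$. This yields \eqref{add50} with $N=N(\varepsilon,x_0)$, the dependence on $x_0$ entering only through $M$ in the case $y_3=\infty$.

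Finally, part (ii) follows by passing to limits: $x_n\ge y_1$ for all $n$ forces $\liminf_{n\to\infty}x_n\ge y_1$, and since (i) holds for every $\varepsilon>0$ we obtain $\limsup_{n\to\infty}x_n\le y_2+\varepsilon$ for all $\varepsilon>0$, hence $\limsup_{n\to\infty}x_n\le y_2$, which is \eqref{add5}. I expect the main obstacle to be the descent step: one must separate the transient phase, in which $x_n$ may start large (indeed arbitrarily large when $y_3=\infty$) and decreases, from the trapped phase, and must guarantee a \emph{uniform} per-step decrease. This is precisely why the orbit is first confined to a compact interval $[y_1,M]$, so that compactness delivers $m_\varepsilon>0$; the invariance of $[y_1,y_2+\varepsilon]$ then rules out any re-entry into the region above $y_2+\varepsilon$, upgrading an ``eventually descends'' statement into a ``holds for all large $n$'' statement.
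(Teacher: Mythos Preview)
Your proof is correct and follows essentially the same approach as the paper's: invariance of a compact interval via Lemma~\ref{lem:y123}(iii) for the lower barrier and $g(x)+d<x$ for $x>y_2$ for the upper one, then a uniform descent above $y_2+\varepsilon$ using compactness and positivity of $h(x)=x-g(x)-d$. The only cosmetic differences are that the paper confines directly to $[y_1,y_3]$ (rather than your $[y_1,M]$) and obtains the descent by a contradiction argument on a hypothetical limit $s>y_2$ using $\sigma=\min_{[s,x_1]}h>0$, whereas you give the explicit step-size bound $m_\varepsilon$; your explicit forward-invariance check for $[y_1,y_2+\varepsilon]$ is a small improvement in presentation.
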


\begin{proof}
By Lemma \ref{lem:y123}, (iii), if $x_0 \in [y_1,y_3]$ then 
\[
x_1 = g(x_0) + r_0 \geq g(x_0)-d  \geq y_1,
\]
so $x_1\ge y_1$.  By \eqref{add1},  for $x \leq c$,
\[
g(x)+d < g(c)+c-g(c)=c.
\]
By  Lemma \ref{lem:y123}, (ii), for $x_0\ge  x^{\ast}$
\[
g(x_0)+d \leq x_0.
\]
Thus, 
$x_0 \leq y_3$ implies 
$$
x_1 \leq g(x_0)+|r_0| \leq g(x_0)+d \leq \max\{ x_0,c \} \leq y_3.
$$
Similarly, $x_n \in [y_1,y_3]$ yields that $x_{n+1} \in [y_1,y_3]$; consequently,   the first equalities in \eqref{add50}  and \eqref{add5} hold.

For each $x_n\leq y_2$, by monotonicity of $g$ on $[0,c]$ and by \eqref{add3}, 
\[
x_{n+1} \leq g(x_n)+|r_n| \leq g(y_2) + d = y_2.
\]
By  Lemma \ref{lem:y123}, (ii), if $x_n\in (y_2,y_3]$ then $g(x_n)+d <x_n$, and thus
\[
x_{n+1} \leq g(x_n)+|r_n| \leq g(x_n) + d < x_n.
\]
Assuming that all $x_n \in (y_2,y_3]$, we obtain that it is a decreasing sequence which has a limit $s \geq y_2$. 
Thus $x_n<x_1$, and $s>y_2$ implies
\[
x_n-x_{n+1} \geq x_n-g(x_n)- d \geq \min_{x \in [s,x_1]} \left[ x-g(x)-d \right] =\sigma>0.  
\]
Then $\lim_{n\to \infty}x_n=-\infty$, which contradicts to $x_n \geq s>y_2$.

Hence for any $x_0\in (y_2, y_3]$ and $\varepsilon>0$, we need 
a finite number of steps $N=N( \varepsilon, x_0)$ to reach $(y_1, y_2+\varepsilon)$, 
which proves the right inequality in \eqref{add50}.

Thus $\limsup_{n \to \infty} x_n \leq y_2$,
and the second equality in \eqref{add5} is also satisfied, which concludes the proof.
\end{proof}

\begin{lemma} 
\label{lem:add_var}
Let Assumption~\ref{as:slope} hold, $x^*\in (f(0), f(b))$  be an arbitrary point, and $\nu=\nu(x^*)$ be defined as in \eqref{def:nux*}. 
Suppose that  
\begin{equation}
\label{def:gc}
g(x) :=f(\nu x), \quad c:=\frac b{\nu } \, ,
\end{equation}
$d$ satisfies \eqref{add1}, and  $y_1$, $y_2$, $y_3$ are defined in  \eqref{add3} and \eqref{add4}, respectively.

Then all solutions of \eqref{4var} with $-d \leq r_n \leq d$ and $x_0 \in [y_1,y_3]$ satisfy parts (i) and (ii) of Lemma \ref{lem:var_gen}.
\end{lemma}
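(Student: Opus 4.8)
The plan is to recognize that this lemma is nothing more than a specialization of Lemma~\ref{lem:var_gen} to the particular function $g(x)=f(\nu x)$. With this choice of $g$, equation~\eqref{4var} becomes literally equation~\eqref{4var_1}, so once we check that $g$ together with $c=b/\nu$ satisfies Assumption~\ref{as:slopeg}, both conclusions (i) and (ii) follow at once by invoking Lemma~\ref{lem:var_gen}. Thus the whole proof reduces to verifying Assumption~\ref{as:slopeg}, and the work is a routine change of variables $y=\nu x$.

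First I would verify the structural conditions on $g$. Continuity of $g(x)=f(\nu x)$ is inherited from continuity of $f$. For $x\in(0,c]=(0,b/\nu]$ we have $\nu x\in(0,b]$, so strict monotonicity of $f$ on $(0,b]$ gives strict monotonicity of $g$ on $(0,c]$. For the ratio, writing $g(x)/x=\nu\,f(\nu x)/(\nu x)$ and setting $y=\nu x$, the strict decrease of $f(y)/y$ on $(0,b]$ from Assumption~\ref{as:slope} transfers directly to strict decrease of $g(x)/x$ on $(0,c]$, the positive constant $\nu$ playing no role. The same substitution handles the comparison for $x>c$: then $\nu x>b$, so Assumption~\ref{as:slope} gives $f(b)/b>f(\nu x)/(\nu x)$, and multiplying by $\nu$ yields $g(c)/c>g(x)/x$.

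Next I would settle the fixed-point requirement, the only part that actually uses the specific choice $\nu=\nu(x^*)$. By Lemma~\ref{lem:fixg}(i), $g(x^*)=f(\nu x^*)=x^*$, so $x^*$ is a fixed point. To place it inside $(0,c)$, I would use the identity $\nu x^*=f^{-1}(x^*)$ established in the proof of Lemma~\ref{lem:fixg}: since $x^*\in(f(0),f(b))$ gives $f^{-1}(x^*)\in(0,b)$, we get $\nu x^*<b$, i.e.\ $x^*<b/\nu=c$. For uniqueness, strict decrease of $g(x)/x$ on $(0,c]$ forces $g(x)/x=1$ at no point other than $x^*$ there, and since $x^*<c$ implies $g(c)/c<g(x^*)/x^*=1$, the bound $g(x)/x<g(c)/c<1$ for $x>c$ rules out any further fixed point. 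Hence $x^*$ is the unique fixed point of $g$ on $(0,\infty)$.

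Finally I would record the inequality $g(c)<c$ demanded by Remark~\ref{rem:tildex*} and implicitly by \eqref{add1}: here $g(c)=f(\nu c)=f(b)$ and $c=b/\nu$, so $g(c)<c$ is equivalent to $\nu<b/f(b)=\Phi(b)$, which holds by Lemma~\ref{lem:fixg}(ii). With $g(c)<c$ in hand, the quantity $c-g(c)$ in \eqref{add1} is positive, while $\max_{x\in[0,x^*]}[g(x)-x]>0$ because $g(x)/x>1$ on $(0,x^*)$, so the admissible range for $d$ in \eqref{add1} is nonempty and consistent. Having confirmed all hypotheses of Lemma~\ref{lem:var_gen}, its conclusions (i) and (ii) apply verbatim to the solutions of \eqref{4var}, completing the argument. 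The only mildly delicate point is tracking the scaling constant $\nu$ through the substitution $y=\nu x$ so that it cancels correctly in each monotonicity and fixed-point statement; there is no substantive obstacle beyond this bookkeeping.
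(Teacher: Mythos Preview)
Your proposal is correct and follows essentially the same approach as the paper: both reduce the lemma to verifying that $g(x)=f(\nu x)$ with $c=b/\nu$ satisfies Assumption~\ref{as:slopeg}, after which Lemma~\ref{lem:var_gen} applies directly. The only minor difference is in the uniqueness argument for the fixed point: you use the strict decrease of $g(x)/x$ on $(0,c]$ together with $g(c)/c<1$ to rule out any other fixed point, whereas the paper assumes a hypothetical second fixed point $y\in(0,c)$ and derives $\Phi(f^{-1}(x^*))=\Phi(f^{-1}(y))$ to force $y=x^*$; your route is slightly more direct but the substance is the same.
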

\begin{proof}
Once we show that $g$ satisfies Assumption \ref{as:slopeg}, the result will follow from Lemma \ref{lem:var_gen}.

From  \eqref{def:Phi} and \eqref{def:nux*} we have
\[
\nu=\Phi\left(f^{-1}(x^\ast) \right)=\frac{f^{-1}(x^\ast)}{x^\ast}\in (0, 1),
\]
and, by Lemma \ref{lem:fixg}, (i),  $g$ has $ x^*$ as a fixed point. 
Also, $g$ is continuous and increasing on $\displaystyle \left[0, c\right]$. For $x>c$,  we have $\nu x>b$, and thus
\[
\frac{g(x)}x=\frac{f\left(\nu x \right)}x = \nu \, \frac{f\left(\nu x \right)}{\nu x} < \nu \, \frac{f(b)}b=\frac {g\left( c \right)}c.
\]
Note that 
\[
\frac {g\left( c \right)}c=\nu \, \frac{f(b)}b=\frac{f^{-1}(x^\ast)}{x^\ast}\, 
\frac{f(b)}b=\frac {\Phi\left(  f^{-1}(x^\ast)\right)}{\Phi(b)}
\]
and $g(x)/x$ is strictly monotone decreasing on $(0,c]$,  since for $0<x_1 < x_2 \leq c$ we have
$$
\frac{g(x_1)}{x_1} = \nu  \frac{f(\nu x_1)}{\nu x_1} > \nu  \frac{f(\nu x_2)}{\nu x_2} = \frac{g(x_2)}{x_2}
$$
by Assumption~\ref{as:slope}, so $\displaystyle \frac{g(x)}{x} < \frac{g(c)}{c}$ for $x>c$.
Since $f^{-1}(x^\ast)<b$ and $\Phi$ is  strictly increasing, we have
\[
\frac {\Phi\left(  f^{-1}(x^\ast)\right)}{\Phi(b)}<1,\quad \text{therefore} \quad \frac {g\left( c \right)}c<1.
\]
 Thus $g(x)<x$ for $x \geq c$, and $g$ cannot have a fixed point on $(c, \infty)$.
Assume that there is one more fixed point $y$ of $g$ on  $(0, c)$ and $y\neq x^{\ast}$. 
Since $y\in (0, c)$, we have $\nu y\in (0, b)$ and 
\[
g(y)=f\left(\nu y \right)=y, \quad \text{or} \quad f\left( \frac{f^{-1}(x^\ast)}{x^\ast}y \right)=y.
\]
Then the fixed point $y$ should be in $(0, f(b))$, and thus $f^{-1}(y)$ is well defined. Therefore 
\[
\frac{f^{-1}(x^\ast)}{x^\ast}y =f^{-1}(y),
\]
which implies
\[
\Phi  \left( f^{-1}(x^{\ast})  \right) =\frac{f^{-1}(x^\ast)}{x^\ast}=\frac{f^{-1}(y)}{y}=\Phi  \left( f^{-1}(y)  \right).
\]
However,  $\Phi  \left( f^{-1}(y)  \right)$ is strictly  increasing  on $(f(0), f(b))$, hence $y=x^{\ast}$.
Application of Lemma~\ref{lem:var_gen} concludes the proof.
\end{proof}

\subsection{Stochastic additive noise}
\label{subsec:stochadd}

In this section, we consider stochastic difference equation \eqref{4}. 
First we state a lemma which will be used in the proof of the main result and which was proved in  \cite{BKR6}. 

\begin{lemma} \cite{BKR6}
\label{cor:barprob}
Let $\xi_1, \dots , \xi_n, \dots$ be a sequence of independent identically distributed random variables 
such that $\mathbb P \left\{\xi\in (a,b)\right\}=\tau\in (0, 1)$ for some interval $(a,b)$, $a<b$. 
Let $\mathcal N_0$  be an a.s. finite random number. Then for each  $J \in {\mathbb N_0}$,
\begin{multline*}
\mathbb{P}\biggl[\text{\rm{There exists} } \mathcal N^*=\mathcal N^*(J)\in (\mathcal N_0, \infty)\\
\text{ \rm{such that}  } \xi_{\mathcal N^*}\in (a,b), \, \xi_{\mathcal N^*+1}\in (a,b), \dots, \xi_{\mathcal N^*+J}\in (a,b)\biggr]=1.
\end{multline*}
\end{lemma}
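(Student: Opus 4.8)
The plan is to reduce the statement to the second Borel--Cantelli lemma by partitioning the index set into disjoint blocks of length $J+1$, so that the relevant success events become independent. Fix $J \in \mathbb{N}_0$. For $k = 0, 1, 2, \dots$, I would let $B_k$ denote the block of consecutive indices $\{k(J+1)+1, \dots, (k+1)(J+1)\}$, which contains exactly $J+1$ elements, and define the event
\[
A_k := \bigcap_{i \in B_k} \{\xi_i \in (a,b)\}.
\]
Since the $\xi_i$ are i.i.d.\ with $\mathbb{P}\{\xi_i \in (a,b)\} = \tau$, each block involves $J+1$ of these variables, and hence $\mathbb{P}(A_k) = \tau^{J+1} > 0$ for every $k$.

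The crucial point is that the blocks $B_k$ are pairwise disjoint, so the events $A_k$ are determined by non-overlapping collections of the independent variables $\xi_i$; consequently $(A_k)_{k \geq 0}$ is a sequence of mutually independent events. Because $\sum_{k=0}^{\infty} \mathbb{P}(A_k) = \sum_{k=0}^{\infty} \tau^{J+1} = +\infty$, the second Borel--Cantelli lemma will give $\mathbb{P}(\limsup_k A_k) = 1$; that is, almost surely infinitely many of the events $A_k$ occur, and in particular there are successful blocks with arbitrarily large starting indices.

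It then remains to incorporate the random threshold $\mathcal{N}_0$. Since $\mathcal{N}_0$ is almost surely finite, the intersection $\{\mathcal{N}_0 < \infty\} \cap \{A_k \text{ occurs for infinitely many } k\}$ still has probability one. On this event I would select an index $k^*$ large enough that the starting index $k^*(J+1)+1$ exceeds $\mathcal{N}_0$ and $A_{k^*}$ occurs; putting $\mathcal{N}^* := k^*(J+1)+1$ then gives $\mathcal{N}^* \in (\mathcal{N}_0, \infty)$ with $\xi_{\mathcal{N}^*}, \xi_{\mathcal{N}^*+1}, \dots, \xi_{\mathcal{N}^*+J}$ all lying in $(a,b)$, which is exactly the claim.

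I expect the only genuinely delicate point to be the handling of the random index $\mathcal{N}_0$: one must intersect the two almost sure events (finiteness of $\mathcal{N}_0$ and recurrence of the blocks) and extract $\mathcal{N}^*$ pathwise, rather than attempting to condition on $\mathcal{N}_0$, because $\mathcal{N}_0$ need not be independent of the $\xi_i$. Everything else --- the block decomposition, the independence of the $A_k$, and the divergence of $\sum_k \mathbb{P}(A_k)$ --- is routine.
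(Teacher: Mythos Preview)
Your argument is correct. The paper does not actually prove this lemma; it simply quotes it from \cite{BKR6} and uses it as a black box. Your block decomposition together with the second Borel--Cantelli lemma is the standard way to establish such a statement, and your handling of the random threshold $\mathcal N_0$ by intersecting the two almost-sure events and selecting $k^*$ pathwise is exactly the right thing to do.
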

The next theorem is the main result of this section which states that, for each $\varepsilon>0$, there exists a number  after which a solution of equation \eqref{4} will reach the interval $[y_1, y_2+\varepsilon]$ and stay there forever. However,   in contrast to   Theorem~\ref{thm:mainmult}, where  such a  number is nonrandom and applies to all solutions a.s.,  Theorem \ref{theorem:add} only proves, for each $\omega\in \Omega$,  the existence of  its own $\mathcal N(\omega)$ 
such that  $x_{\mathcal N(\omega)}\in [y_1, y_2+\varepsilon]$ . 
In addition, it is shown  that, with an arbitrarily close to 1 probability, there is a nonrandom 
$N$ such that $x_n\in [y_1, y_2+\varepsilon]$ for $n\ge N$.

\begin{theorem}
\label{theorem:add}
Let Assumptions~\ref{as:slope} and \ref{as:chibound}  hold,  
$x^*\in (f(0), f(b))$  be an arbitrary point, and $\nu=\nu(x^*)$ be chosen as in \eqref{def:nux*},  
$g$ and $c$ be defined as in \eqref {def:gc}, and $d$ satisfy  \eqref{add1}. 
Suppose that $y_1$, $y_2$, $y_3$ are defined as in  \eqref{add3} and \eqref{add4}, respectively,
and $x_n$ is a solution to equation \eqref{4} with an arbitrary $x_0>0$ and $\ell>0$ satisfying 
\begin{equation}
\label{eq:l_min}
\ell \leq d.
\end{equation}
Then
\begin{enumerate}
\item[(i)] for each $\varepsilon>0$, there exists a random $\mathcal N(\omega)=\mathcal N(\omega, x_0, \ell, x^*, \varepsilon)$ such that 
for $n\ge \mathcal N(\omega)$ we have, a.s. on $\Omega$, 
\begin{equation}
\label{add50a0}
y_1\le x_n\le  y_2+\varepsilon;
\end{equation}

\item[(ii)] for each $\varepsilon>0$ and each $\gamma \in (0, 1)$, there exists a nonrandom $N=N(\gamma, x_0, \ell, x^*, \varepsilon)$ 
such 
that
\begin{equation}
\label{add50a}
\mathbb P \{y_1\le x_n\le  y_2+\varepsilon, \,\, \text{\rm{for}} \,\,  n\ge N\}>\gamma;
\end{equation}
\item[(iii)] 
we have
\begin{equation}
\label{add5a}
\liminf_{n \to \infty} x_n \geq y_1, \quad \limsup_{n \to \infty} x_n \leq y_2, \quad \text{a.s}.
\end{equation}
\end{enumerate}
\end{theorem}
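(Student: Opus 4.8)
The plan is to exploit that the additive perturbation in \eqref{4} is bounded. By Assumption~\ref{as:chibound} we have $|\chi_{n+1}|\le 1$ a.s., so together with \eqref{eq:l_min} the sequence $r_n:=\ell\chi_{n+1}$ satisfies $-d\le r_n\le d$ for every $n$, a.s. Consequently every realization of \eqref{4} is a path of the deterministic equation \eqref{4var} with admissible perturbations, and Lemma~\ref{lem:add_var} (through Lemma~\ref{lem:var_gen}) applies pathwise. I will use two facts established there: the interval $[y_1,y_3]$ is forward invariant under any $r_n\in[-d,d]$, and once $x_m\in[y_1,y_3]$ the solution obeys $y_1\le x_n\le y_2+\varepsilon$ for all large $n$. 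The whole argument then reduces to controlling how the solution first reaches $[y_1,y_3]$ from an arbitrary $x_0>0$; the upper and lower parts of this are of completely different nature. Throughout I may assume $\varepsilon$ small, in particular $y_2+\varepsilon\le y_3$ when $y_3<\infty$, since a larger $\varepsilon$ only weakens the claim.

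First I would dispose of the upper bound, which is deterministic and uniform in $\omega$. By Lemma~\ref{lem:y123}(ii), $g(x)+d<x$ for $x>y_2$, while monotonicity of $g$ on $[0,c]$ and $g(y_2)+d=y_2$ (see \eqref{add2}) give $x_{n+1}\le g(y_2)+d=y_2$ whenever $x_n\le y_2$. Hence if $x_n>y_2$ then $x_{n+1}<x_n$, with a decrease bounded below by $\min_{x\in[y_2+\varepsilon,\,x_0]}\,(x-g(x)-d)>0$ as long as $x_n\ge y_2+\varepsilon$; after a nonrandom, noise-independent number of steps $N_{\mathrm{up}}=N_{\mathrm{up}}(x_0,\varepsilon)$ one has $x_n\le y_2+\varepsilon$ for all $n\ge N_{\mathrm{up}}$, and this bound is preserved thereafter. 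Letting $\varepsilon\to0$ already yields $\limsup_{n\to\infty}x_n\le y_2$ surely, the second inequality in \eqref{add5a}.

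The lower bound is where the argument becomes genuinely probabilistic, and this is the main obstacle: under unfavorable noise the solution can be driven down to $0$ and linger there, so no nonrandom entrance time can serve every path. The mechanism I would use is that $g(x)>x$ on $(0,x^\ast)$ (Assumption~\ref{as:slopeg} with $g(x^\ast)/x^\ast=1$) and $y_1<x^\ast$ by \eqref{add2}. Fix a small $\eta>0$ and set $\tau:=\mathbb P\{\chi\in(1-\eta,1)\}>0$, positive by Assumption~\ref{as:chibound}. If $\chi_{n+1}\in(1-\eta,1)$ and $x_n\in[0,y_1)$, then $x_{n+1}=g(x_n)+\ell\chi_{n+1}\ge x_n+\ell(1-\eta)$, an increase of at least $\delta_0:=\ell(1-\eta)>0$; hence $J+1$ consecutive such ``favorable'' steps, with $J:=\lceil y_1/\delta_0\rceil$, push any starting value in $[0,y_1)$ above $y_1$. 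Since $x_n<y_1$ forces $x_{n+1}\le g(x_n)+d<g(y_1)+d=y_1+2d$, and after $N_{\mathrm{up}}$ we also have $x_{n+1}\le y_2+\varepsilon\le y_3$, the crossing lands inside $[y_1,y_3]$, and forward invariance keeps the solution there forever. To obtain (i) I would apply Lemma~\ref{cor:barprob} with $\xi_n=\chi_n$, the interval $(1-\eta,1)$ and $\mathcal N_0=N_{\mathrm{up}}$: a.s. there is a random time $\mathcal N^\ast>N_{\mathrm{up}}$ with $\chi_{\mathcal N^\ast},\dots,\chi_{\mathcal N^\ast+J}\in(1-\eta,1)$, after which $x_{\mathcal N}\in[y_1,y_3]$ for $\mathcal N:=\mathcal N^\ast+J$. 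Invariance plus the deterministic upper bound then give $y_1\le x_n\le y_2+\varepsilon$ for all $n\ge\mathcal N(\omega)$, which is \eqref{add50a0}.

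For (ii) I would make the favorable run quantitative. Partition $\{N_{\mathrm{up}},\dots,N-1\}$ into $K=\lfloor (N-N_{\mathrm{up}})/(J+1)\rfloor$ disjoint blocks of length $J+1$; by independence of the $\chi_n$, each block consists entirely of favorable noises with probability $p:=\tau^{J+1}>0$, independently across blocks. The probability that no block is favorable is $(1-p)^K$, so choosing $N$ large enough that $(1-p)^K<1-\gamma$ produces, with probability exceeding $\gamma$, at least one fully favorable block; as above, completion of such a block places the solution in $[y_1,y_3]$, whence invariance and the deterministic upper bound yield $y_1\le x_n\le y_2+\varepsilon$ for all $n\ge N$, proving \eqref{add50a}. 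Since $(1-p)^K<1-\gamma$ requires $K>\ln(1-\gamma)/\ln(1-p)$, the threshold $N$ grows as $\gamma\uparrow1$, matching the stated dependence. Finally, for (iii) the sure bound $\limsup_{n\to\infty}x_n\le y_2$ was already obtained, while part (i) gives $x_n\ge y_1$ for all $n\ge\mathcal N(\omega)$ a.s., hence $\liminf_{n\to\infty}x_n\ge y_1$ a.s., completing \eqref{add5a}.
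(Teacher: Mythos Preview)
Your argument is correct and shares the paper's essential mechanism for the lower bound (a run of ``favorable'' noises $\chi\in[\varepsilon_0,1]$ pushes the solution above $y_1$, invoking Lemma~\ref{cor:barprob}), but you organise the rest differently and in a couple of places more efficiently.

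For the upper bound, the paper works in two stages: it first drives $x_n$ into $[0,y_3]$ via a geometric contraction with rate $\lambda=(g(c)+\ell)/c<1$, and only afterwards appeals to Lemma~\ref{lem:var_gen} (with the random entry point as initial value) to reach $[y_1,y_2+\varepsilon]$, producing a second random contribution $\mathcal N_2(\omega)$. You bypass this by observing directly that $g(x)+d<x$ for all $x>y_2$ (Lemma~\ref{lem:y123}(ii)), so the descent below $y_2+\varepsilon$ is deterministic and completes in a fixed $N_{\mathrm{up}}(x_0,\varepsilon)$ steps; this is cleaner and makes the only randomness in $\mathcal N(\omega)$ come from the lower crossing. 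For part~(ii) the paper uses the abstract fact that an a.s.\ finite $\mathcal N$ has distribution mass concentrated on some $\{1,\dots,N\}$ to within $1-\gamma$, whereas you give the explicit block construction with success probability $p=\tau^{J+1}$ per block and failure probability $(1-p)^K$; your argument yields an effective bound on $N$ in terms of $\gamma$, which the paper's does not. Both routes are valid; yours is more quantitative, the paper's is slightly shorter once Lemma~\ref{lem:var_gen} is available.
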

 
\begin{proof}
(i) In view of Lemma~\ref{lem:add_var}, we  have to prove first that for the solution $x_n$ of \eqref{4} with an arbitrary initial value 
$x_0>0$, there exists a random a.s. finite $\mathcal N_1(\omega)=\mathcal N_1(\omega, x_0, \ell, x^*)$ such that for $n\ge \mathcal N_1(\omega)$ we have  $x_n \in [y_1,y_3]$, a.s. on $\Omega$. 

The proof consists of two parts.  First, we verify that for $x_0 > y_3$ there exists  a nonnrandom $K\in \mathbb N$ 
such that $x_n \leq y_3$  for $n\ge K$; this part  certainly is worth considering only for $y_3<\infty$. 
We also show that $x_k < y_1$ implies $x_{k+1}<y_3$. Second, we find a random a.s. finite $\mathcal N_1=\mathcal N_1(\omega, x_0, \ell, 
x^*)$ such that  $x_{\mathcal N_1}(\omega) \in  [y_1,y_3]$ and 
a random $\mathcal N(\omega)=\mathcal N(\omega, x_0, \ell, x^*, \varepsilon)$ such that
\eqref{add50a0} holds for $n \geq \mathcal N(\omega)$.

Assume that $y_3<\infty$ and recall that $y_3>c >y_2$, see Lemma~\ref{lem:y123}, (ii),  and Fig.~\ref{figure1}. 

Let $x_k \in (y_3, \infty)$ for some $k\in \mathbb N$. Since $d<c-g(c)$ by \eqref{add1} and $g(x)/x<g(c)/c$ for $x>c$, 
$$ \frac{g(x_k)+\ell \chi_{1}}{x_k} \leq \frac{g(x_k)}{x_k}+\frac{\ell}{x_k}<\frac{g(c)+d}{c} < 
\frac{c-d+d}{c}=1,
$$
so the next sequence term is smaller: $x_{k+1} < x_k$. Moreover, denoting 
$$\lambda:= \frac{g(c)+\ell}{c}<1, $$ we have $x_{k+1} \leq \lambda x_k$, as long as $x_k \in (y_3, \infty)$. 
Thus, whenever $x_0>y_3$, after at most $K$ steps,  where
\begin{equation}
\label{def:K}
K>\ln \left.\left(\frac{y_3}{x_0} \right) \right/ \ln \lambda \, ,
\end{equation}
we have $x_K \in [0,y_3]$. 

If $x_k<y_1$, condition \eqref{add1}, $0<c < y_3$  and the fact that $g$ is monotone increasing on $[0,c]$ imply 
$$
x_{k+1} \leq g(x_k)+d< g(c) +c-g(c)=c< y_3.
$$
Thus, $x_{k+1}<y_3$ for any $x_k<y_1$.  
So we conclude that for each $x_0>0$ and $K$ defined as in \eqref{def:K}, the solution reaches the interval $[0, y_3]$ after at most $K$ 
steps.

Now we proceed to the second part of the proof, assuming that  $x_0< y_1$.
By Assumption~\ref{as:chibound}, for any $\varepsilon_0 \in (0,1)$, the probability $p_1$ of the following event  is positive: 
\begin{equation*}
p_1 := {\mathbb P} \left\{ \chi \in [\varepsilon_0,1] \right\} >0.
\end{equation*}
By \eqref{add2} we have  $y_1<x^{\ast}$, therefore  $x_k \in [0,y_1)$ implies $g(x_k) > x_k$.
If in addition $\chi_{k+1} \in [\varepsilon_0,1]$, we have
$$x_{k+1} = g(x_k)+\ell \chi_{k+1} >x_k+\ell \varepsilon_0.$$
Fix $\varepsilon_0 \in (0,1)$ and define
$$
J := \left[ \frac{y_1}{\ell \varepsilon_0} \right] +1.
$$
If $J$ successive $\chi_j$, starting with $\chi_{k+1}$, satisfy  $\chi_{j} \in [\varepsilon_0,1]$, we have either $x_{k +j}\geq y_1$, for some $j=1, 2, \dots, J-1$,  or
$$x_{k+J} > x_k + J \ell  \varepsilon_0 > y_1. 
$$
By Lemma~\ref{cor:barprob}, a.s., there is a sequence of such $J$ successive $\chi_j$, which starts from an a.s. finite random number $\mathcal N^*$.

Summarizing all of the above,  we conclude that  for $x_0>y_3$  after $n\le K$ steps we have $x_n\in [0, y_3]$. If $x_n<y_1$ we need at 
most $\mathcal N^*+J$ steps to reach $[y_1, y_3]$. Once $x_n \in [y_1, y_3]$,  all $x_{n+k} \in [y_1, y_3]$, $k \in \mathbb N$.  Thus,  by Lemma \ref{cor:barprob}, for 
\begin{equation}
\label{def:N1}
n\ge \mathcal N_1=K+\mathcal N^*+J
\end{equation}
 we have $x_{n} \in [y_1,y_3]$, a.s.  
 
Fix some $\varepsilon>0$. Suppose that $x_n\in [y_2+\varepsilon, y_3]$, where $n$ satisfies \eqref{def:N1}, and apply Lemmata~\ref{lem:var_gen}-\ref{lem:add_var}, considering $x_n=x_{n(\omega)}$ as the initial value for each  $\omega\in \Omega$.  Then, for each  $\omega\in \Omega$,  Lemmata~\ref{lem:var_gen}-\ref{lem:add_var} claim the existence of an a.s. finite number $\mathcal N_2(x_{n(\omega)}, \varepsilon)$  such that  the solution will be in $[y_1, y_2+\varepsilon]$ after $\mathcal N_2$ steps.  Denoting 
\[
\mathcal N=\mathcal N(x_0, \varepsilon, \ell, x^*):=K+\mathcal N^*+J +\mathcal N_2,
\] 
we conclude that 
\begin{equation}
\label{eq:xn12}
x_n \in [y_1,y_2+\varepsilon] \quad  \text{for any $n \geq \mathcal N$},
\end{equation}
which completes the proof of (i).

Since $\mathcal N(x_0, \varepsilon, \ell, x^*)$ is a.s. finite, we can define
\[
 \Omega_i:=\{\omega\in \Omega: i-1\le \mathcal N<i\}.
\]
Note that $\Omega=\bigcup_{i=1}^\infty \Omega_i$ and $\Omega_i\bigcap \Omega_j=\emptyset$ for all $i\neq j$, 
therefore $\sum\limits_{i=1}^\infty \mathbb P(\Omega_i)=1$.

For any $\gamma\in (0, 1)$, we define  $\Omega(\gamma):= \bigcup_{i=1}^N \Omega_i$, where $N=N(\gamma, x_0, \ell, x^*, 
\varepsilon)\in \mathbf N$ is such that
\[
\sum_{i=1}^N \mathbb P(\Omega_i)\in (\gamma, 1).
\]
Then,
\[
\mathcal N(x_0, \varepsilon, \ell, x^*)\le N(\gamma, x_0, \ell, x^*, \varepsilon) \quad \text{on}\quad \Omega(\gamma), \quad \text{and}\quad \mathbb P(\Omega_\gamma)>\gamma.
\]

This completes  the proof of (ii). Part (iii) follows from \eqref{eq:xn12} since $\varepsilon >0$ is arbitrary.
\end{proof}
The next theorem deals with the situation when the level of noise can be decreased. It shows that a solution will eventually be in any arbitrarily small neighborhood of the point $x^{\ast}$ with arbitrarily close to 1 probability.

\begin{theorem}
\label{prop:l0add}
Let Assumptions~\ref{as:slope} and \ref{as:chibound}  hold,  $x_0>0$ be an arbitrary initial value, $x^*\in (f(0), f(b))$  be an arbitrary point, $\nu=\nu(x^*)$ be chosen as in \eqref{def:nux*}.

Then, for each $\varepsilon>0$ and $\gamma\in (0, 1)$, we can find $d$ such that for the solution $x_n$ to \eqref{4} with  $\ell\le d$,   and for  some  
nonrandom  $N=N(\gamma, x_0, \ell, x^*, \varepsilon)\in \mathbb N$, we have 
\[
\mathbb P\{x_n\in (x^*-\varepsilon, x^*+\varepsilon) \,\,  \text{for all $n\ge N$}\}\ge \gamma.
\]
\end{theorem}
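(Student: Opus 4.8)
The plan is to reduce this to Theorem~\ref{theorem:add}(ii) by exploiting the fact that the bracketing points $y_1$ and $y_2$ both collapse onto $x^*$ as the noise budget $d$ shrinks. Throughout, $\nu=\nu(x^*)$ is fixed, so the function $g(x)=f(\nu x)$ and the constant $c=b/\nu$ from \eqref{def:gc} are fixed; as established in the proof of Lemma~\ref{lem:add_var}, $g$ satisfies Assumption~\ref{as:slopeg} with the unique fixed point $x^*\in(0,c)$. Only $d$ is treated as a free parameter, and I write $y_1=y_1(d)$, $y_2=y_2(d)$ to emphasize this dependence; note that these points depend on $d$ alone (not on $\ell$), so fixing $d$ fixes the bracketing interval uniformly over all $\ell\le d$.

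First I would record the sign structure of $h(x):=g(x)-x$. Because $x^*$ is the unique fixed point of $g$ on $(0,\infty)$ and $g(x)/x$ is strictly decreasing (Assumption~\ref{as:slopeg}), one has $h(x)>0$ for $x\in(0,x^*)$ and $h(x)<0$ for $x>x^*$, with $h$ continuous and $h(x^*)=0$. By Lemma~\ref{lem:y123}(ii), $y_1(d)$ and $y_2(d)$ are precisely the points with $h(y_1(d))=d$ and $h(y_2(d))=-d$, satisfying $0<y_1(d)<x^*<y_2(d)<c$.

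The key step is to show that $y_1(d)\to x^*$ and $y_2(d)\to x^*$ as $d\to 0^+$. For the first limit, fix any $x_0\in(0,x^*)$; then $h(x_0)>0$, so for every $d<h(x_0)$ the point $x_0$ belongs to the set defining $y_1(d)$ in \eqref{add3}, whence $y_1(d)\ge x_0$. Letting $x_0\uparrow x^*$ gives $\liminf_{d\to0^+}y_1(d)\ge x^*$, and since $y_1(d)<x^*$ always, $y_1(d)\to x^*$. A symmetric argument applied to any $x_1\in(x^*,c)$, using $h(x_1)<0$ together with \eqref{add3}, yields $y_2(d)\le x_1$ for all $d<-h(x_1)$, hence $y_2(d)\to x^*$. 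I expect this limit argument to be the main (though mild) obstacle, as it is the step where the qualitative hypotheses on $g$ are actually consumed.

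Finally, given $\varepsilon>0$ and $\gamma\in(0,1)$, I would choose $d$ so small that simultaneously (a) $d$ satisfies \eqref{add1} --- automatic for small $d$, since its right-hand side is a fixed positive constant once $\nu$ is fixed, (b) $y_1(d)>x^*-\varepsilon$, and (c) $y_2(d)<x^*+\varepsilon/2$; conditions (b) and (c) are available by the previous step. For this $d$ and any $\ell\le d$, I would apply Theorem~\ref{theorem:add}(ii) with auxiliary tolerance $\varepsilon/2$ and the given $\gamma$, producing a nonrandom $N$ with $\mathbb P\{y_1\le x_n\le y_2+\varepsilon/2\text{ for }n\ge N\}>\gamma$. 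Since $y_1>x^*-\varepsilon$ and $y_2+\varepsilon/2<x^*+\varepsilon$, the interval $[y_1,y_2+\varepsilon/2]$ is contained in $(x^*-\varepsilon,x^*+\varepsilon)$, so the event in that bound is contained in $\{x_n\in(x^*-\varepsilon,x^*+\varepsilon)\text{ for }n\ge N\}$, and monotonicity of probability yields the desired inequality $\ge\gamma$.
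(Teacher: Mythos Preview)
Your proof is correct and follows the same overall strategy as the paper: show that $y_1(d)\to x^*$ and $y_2(d)\to x^*$ as $d\to 0^+$, choose $d$ small enough to force $[y_1,\,y_2+\varepsilon/2]\subset(x^*-\varepsilon,\,x^*+\varepsilon)$, and then invoke Theorem~\ref{theorem:add}(ii) with tolerance $\varepsilon/2$. The only difference is in how the limits $y_i(d)\to x^*$ are obtained: the paper introduces the auxiliary function $\Psi(x)=g(x)/x$, derives the bounds $x^*=\Psi^{-1}(1)>y_1(d)>\Psi^{-1}(1+d/x_{\max})$ and $x^*=\Psi^{-1}(1)<y_2(d)<\Psi^{-1}(1-d/x^*)$, and appeals to continuity of $\Psi^{-1}$, whereas your argument works directly from the sign of $h(x)=g(x)-x$ and the $\sup/\inf$ definitions \eqref{add3}. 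Your route is more elementary and avoids the extra machinery; the paper's route yields explicit quantitative envelopes for $y_1(d)$ and $y_2(d)$ in terms of $\Psi^{-1}$, which is stylistically parallel to the treatment of the multiplicative case via $\Phi^{-1}$ but is not actually needed for the conclusion here.
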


\begin{proof}
Let  $g$ and $c$ be defined as in \eqref {def:gc} and let $x_{\max}$ be a point (maybe not unique) where the function $g(x)-x$ attains its maximum on $[0, x^*]$:
\[
g(x_{\max})-x_{\max}=\max_{x\in [0, x^*]}\{ g(x)-x\}.
\]
Note that, for each $d$ satisfying \eqref{add1}, the values $y_1$ and $y_2$ defined by \eqref{add3} satisfy
\[
y_1\in (x_{\max}, x^*), \quad y_2\in (x^*, c),
\]
and $y_1=y_1(d)$, $y_2=y_2(d)$. Let us show that 
\[
\lim_{d\to 0}y_1(d)=\lim_{d\to 0}y_1(d)=x^*.
\]
To this end, define 
\[
\Psi(x)=\frac {g(x)}x, \quad x>0.
\]
By Assumption \ref{as:slopeg}, the  function $\Psi$ decreases and is continuous on $(0, c)$, so it has an inverse function 
$\Psi^{-1}$, which also decreases and is continuous on $(\Psi(0), \Psi(c))$.  We have  $\Psi(x^*)=1$ and 
\[
g(y_1)-y_1=d, \quad y_2-g(y_2)=d.
\]
So, by dividing each of the above equations by $y_1$ and $y_2$, respectively, we arrive at
\[
\frac{g(y_1)}{y_1}-1=\frac d{y_1}, \quad 1-\frac{g(y_2)}{y_2}=\frac d{y_2}.
\]
This leads to the estimates
\[
1<\Psi(y_1)=1+\frac d{y_1}<1+\frac d{x_{\max}}\to 1 \quad \text{as} \quad d\to 0
\]
and 
\[
1>\Psi(y_2)=1-\frac d{y_2}>1-\frac d{x^*}\to 1 \quad \text{as} \quad d\to 0.
\]
Hence
\begin{equation}
\begin{split}
\label{limlrx*}
x^*=&\Psi^{-1}(1)>y_1(d)>\Psi^{-1}\left(1+\frac d{x_{\max}}\right)\to x^* \quad \text{as} \quad d\to 0,\\
x^*=&\Psi^{-1}(1)<y_2(d)<\Psi^{-1}\left(1-\frac d{x^*}\right)\to x^*\quad \text{as} \quad d\to 0.
\end{split}
\end{equation}
Now,  fix some $\varepsilon>0$ and find $\delta$ such that
\begin{equation}
\label{cond:contPhi}
\left|\Psi^{-1}\left(1+u\right)-\Psi^{-1}\left(1\right)\right|<\frac{\varepsilon}{2} \quad \text{as soon as} \quad |u|<\delta.
\end{equation}
Let $d$, in addition to \eqref{add1}, satisfy
\[
d<\delta x^*.
\]
Then, from \eqref{limlrx*} and \eqref{cond:contPhi}, we obtain that
\begin{equation}
\begin{split}
\label{limy12x*}
y_1(d)>&\Psi^{-1}(1)+\left[
\Psi^{-1}\left(1+\frac d{x_{\max}}\right)-\Psi^{-1}(1)\right]\ge x^*-\frac{\varepsilon}{2},\\
y_2(d)<&\Psi^{-1}(1)+\left[
\Psi^{-1}\left(1+\frac d{x^*}\right)-\Psi^{-1}(1)\right]\ge x^*+\frac{\varepsilon}{2}.
\end{split}
\end{equation}
Further, we work only with $\ell\le d$.
Applying part (ii) of Theorem~\ref{theorem:add} for 
$\varepsilon/2$ and $\gamma\in (0,1)$, we find a nonrandom $N=N(\gamma, x_0, \ell, x^*, \varepsilon)\in \mathbb N$ and 
$\Omega_\gamma\subset \Omega$ with  $\mathbb P (\Omega_\gamma)>\gamma$, such that for each $\omega\in \Omega_\gamma$ and $n\ge N$ 
we have
\[
y_1\le x_n\le y_2+\frac{\varepsilon}{2}.
\]
This, along with inequalities \eqref{limy12x*}, gives us the desired result: 
for each  $n\ge N$, we have on ~$\Omega_\gamma$ 
\[
x^*- \frac{\varepsilon}{2}< y_1\le x_n\le y_2+ \frac{\varepsilon}{2}<x^*+\varepsilon,
\]
which concludes the proof.
\end{proof}


\section{Numerical Examples}
\label{sec:examples}

In this section, we consider two examples: the Beverton-Holt chaotic map
\begin{equation}  
\label{BH_NODY}
f(x)= \frac{2.5 x}{1+x^5}, \quad x\ge0, 
\end{equation}  
considered in \cite{NODY}
and a particular case of \eqref{eq:milton}
\begin{equation}
\label{eq:CAMWA}
f(x) = x \left( 0.55 + \frac{3.45}{1+x^9} \right), \quad x\ge0, 
\end{equation}
introduced in \cite{Milton} and also considered in \cite{Liz_CAMWA}.

\begin{example}
We can show that the function $f$ defined in \eqref{BH_NODY} is increasing, 
up to its maximum attained at $b=2^{-2/5}\approx 0.757858$. Also, 
\[
f(b)=f(2^{-2/5})=2 \times  2^{-2/5}\approx 1.5157, 
\]
and 
\[
\Phi(0)=\lim_{x\to 0}\frac x{f(x)}=\lim_{x\to 0}\frac{1+x^5}{2.5}=0.4, \quad \Phi(b)=\frac b{f(b)}=\frac{1+2^{-2}}{2.5}=0.5.
\]
Based on the results of Sections \ref{sec:multi} and \ref{sec:add} we conclude that with PF control we can stabilize any point 
$x^* \in (f(0),f(b))=(0, 2b)\approx (0, 1.5157)$ with  corresponding $\nu$-values belonging  to the interval $(\Phi(0), \Phi(b))=(0, 0.5)$.

For example, we can stabilize $x^*=1.5$, which can be achieved for $\nu \approx 0.4685$.
In Fig.~\ref{figure2}, we get a blurry equilibrium centered at $x^*=1.5$ with
smaller $\ell=0.01$ (upper left) and larger $\ell=0.025$ (upper right).
In the other graphs $\ell=0.015$ and five runs for each parameter set are presented, where we stabilize
$x^*=1.125$ ($\nu \approx 0.415033$) and $x^*=1.1$ ($\nu \approx 0.40721$) in the medium row
and present the two cases where all solutions tend to zero ($\nu =0.39<0.4$, lower left) or are not stabilized
($\nu =0.75>0.5$, lower right) in the lower row.

\begin{figure}[ht]
\centering
\includegraphics[height=.18\textheight]{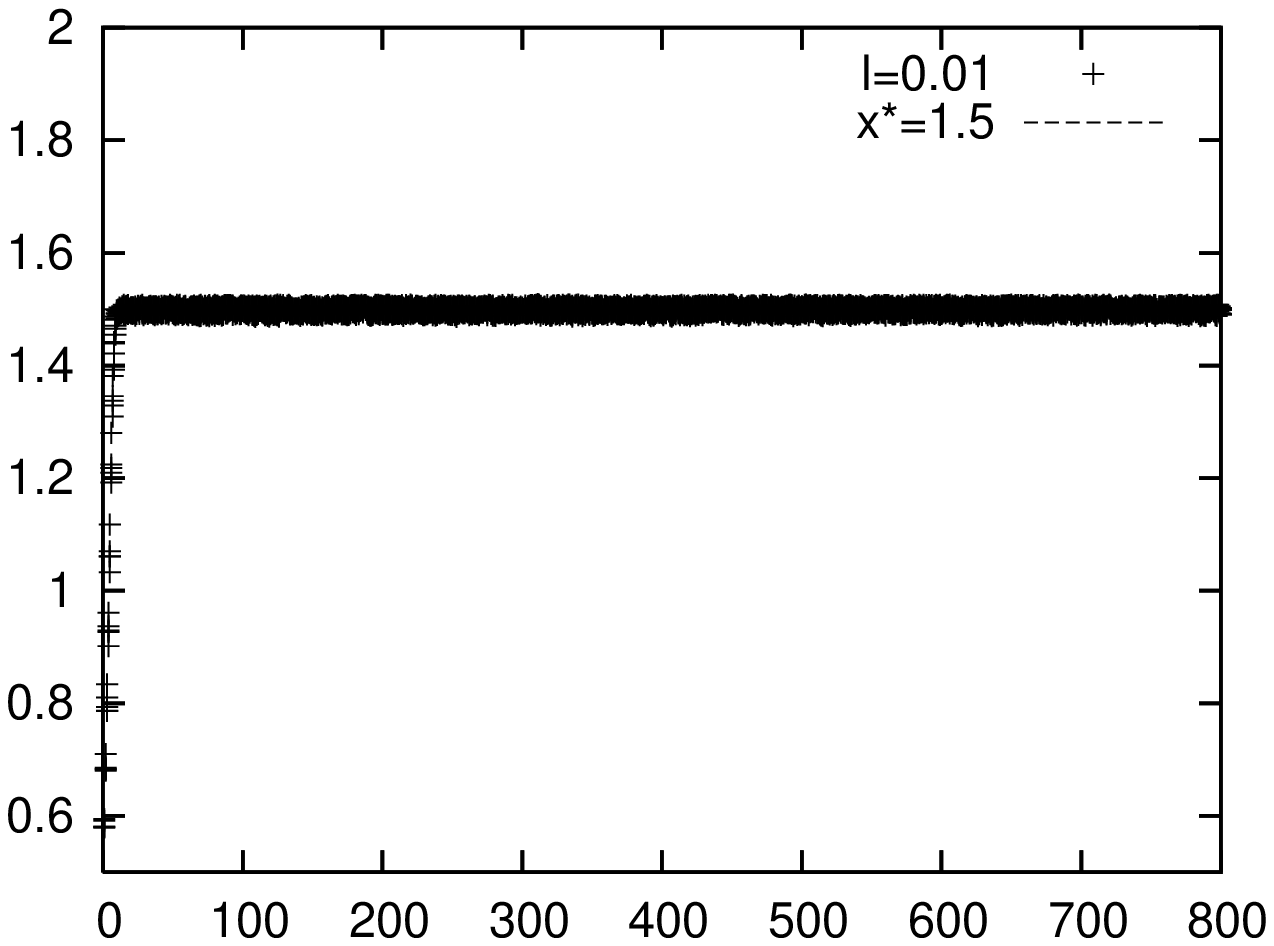}
\hspace{10mm}
\includegraphics[height=.18\textheight]{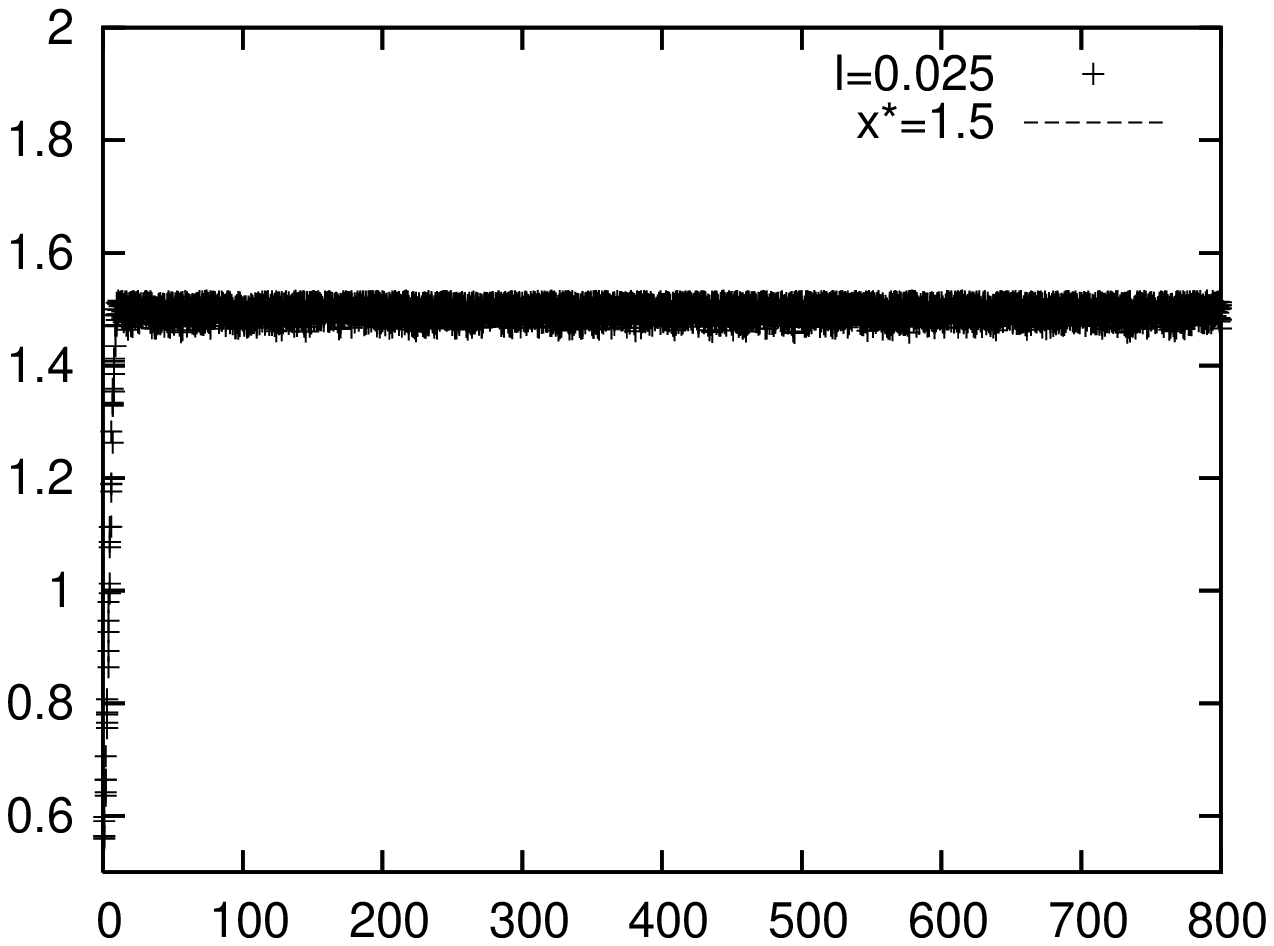}
\vspace{2mm}
\includegraphics[height=.18\textheight]{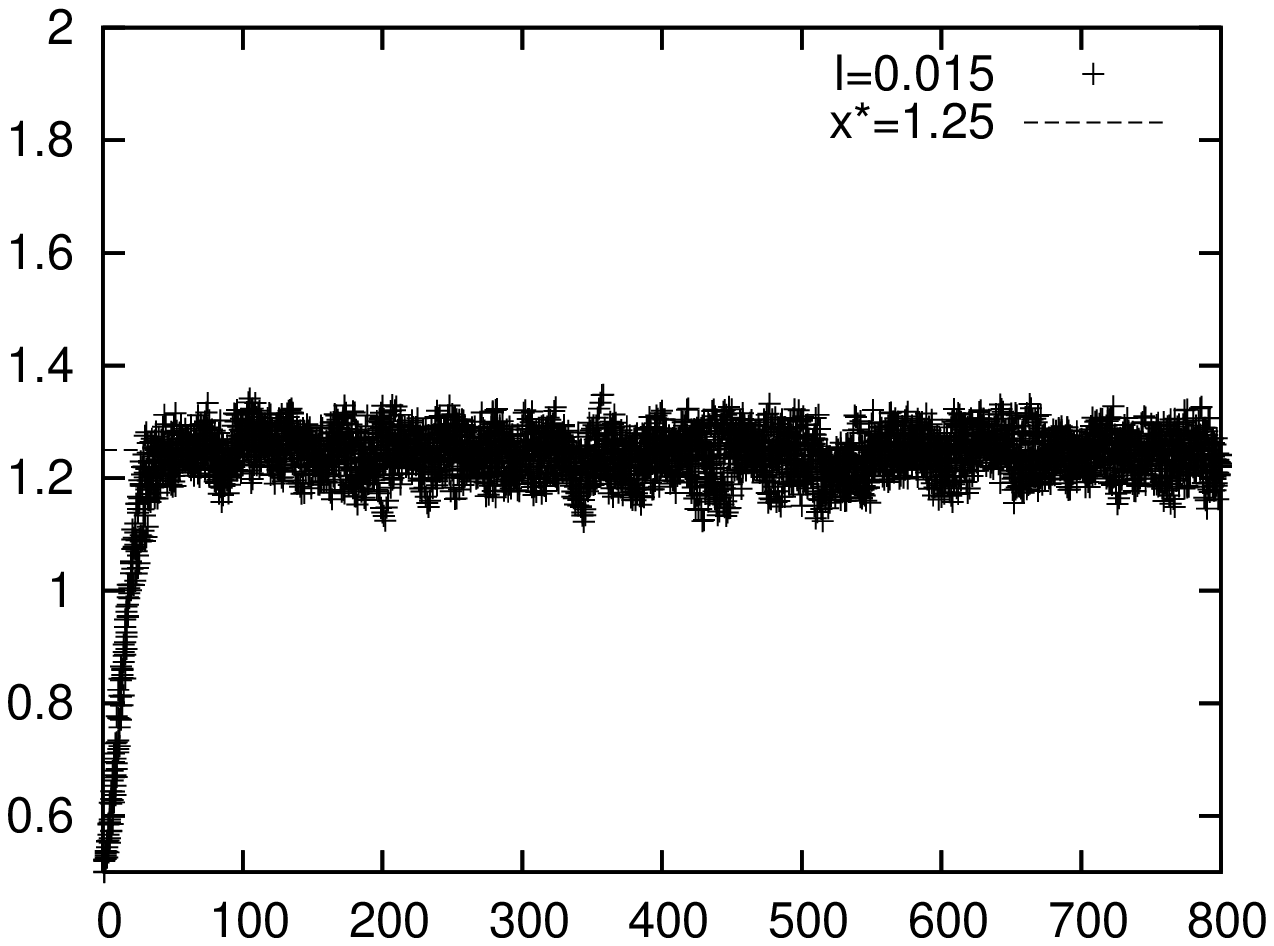}
\hspace{10mm}
\includegraphics[height=.18\textheight]{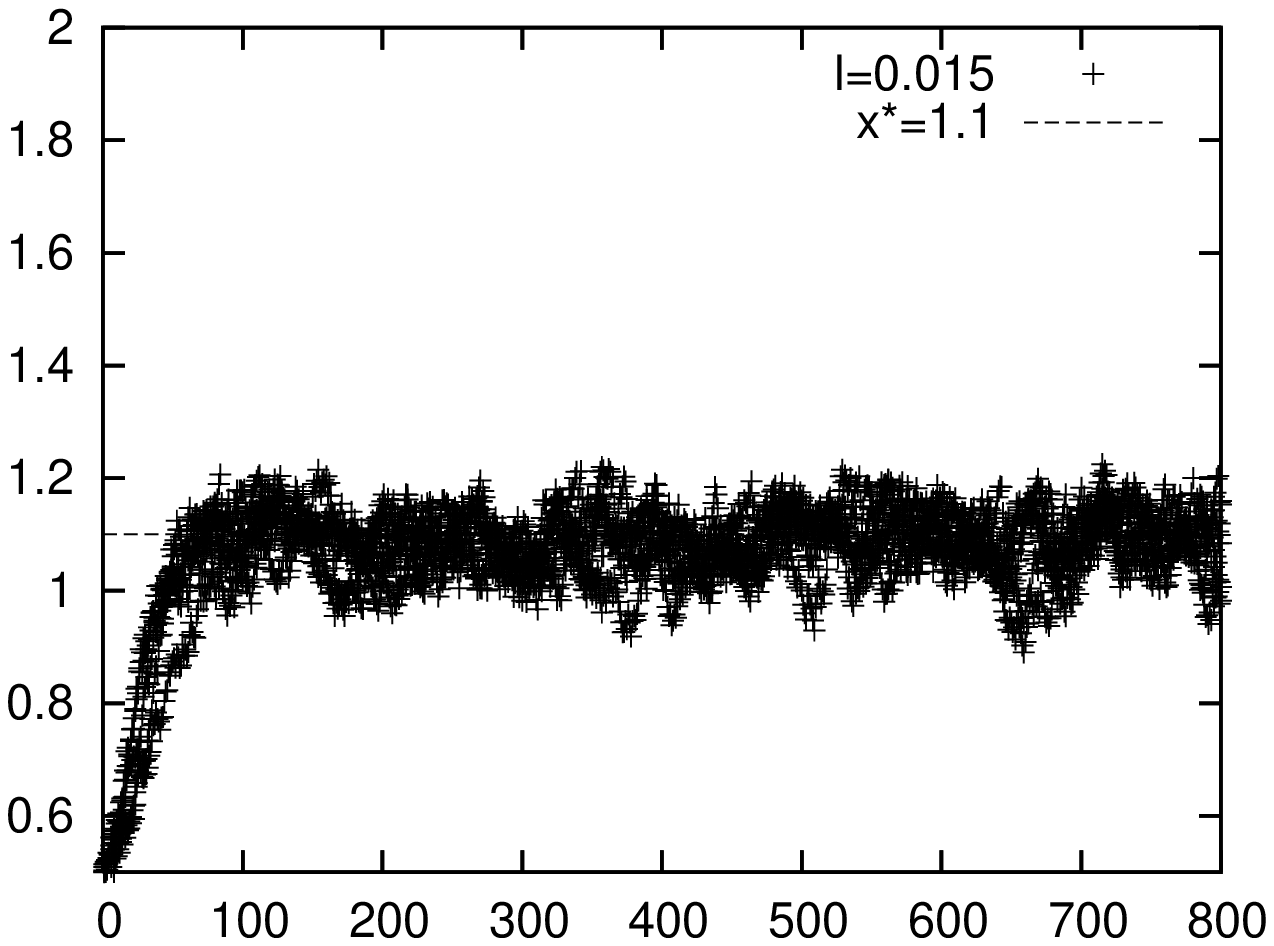}
\vspace{2mm}
\includegraphics[height=.18\textheight]{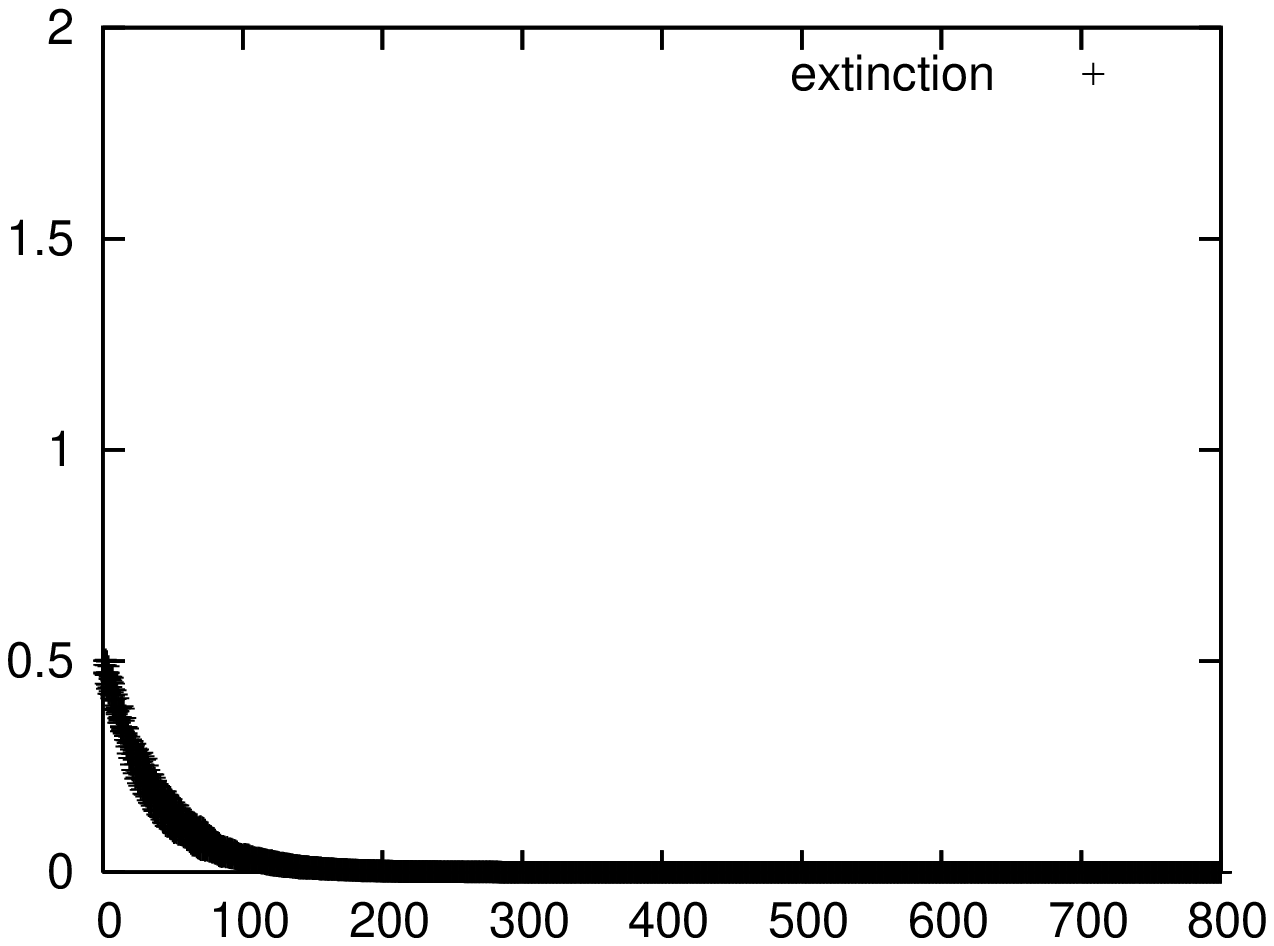}
\hspace{10mm}
\includegraphics[height=.18\textheight]{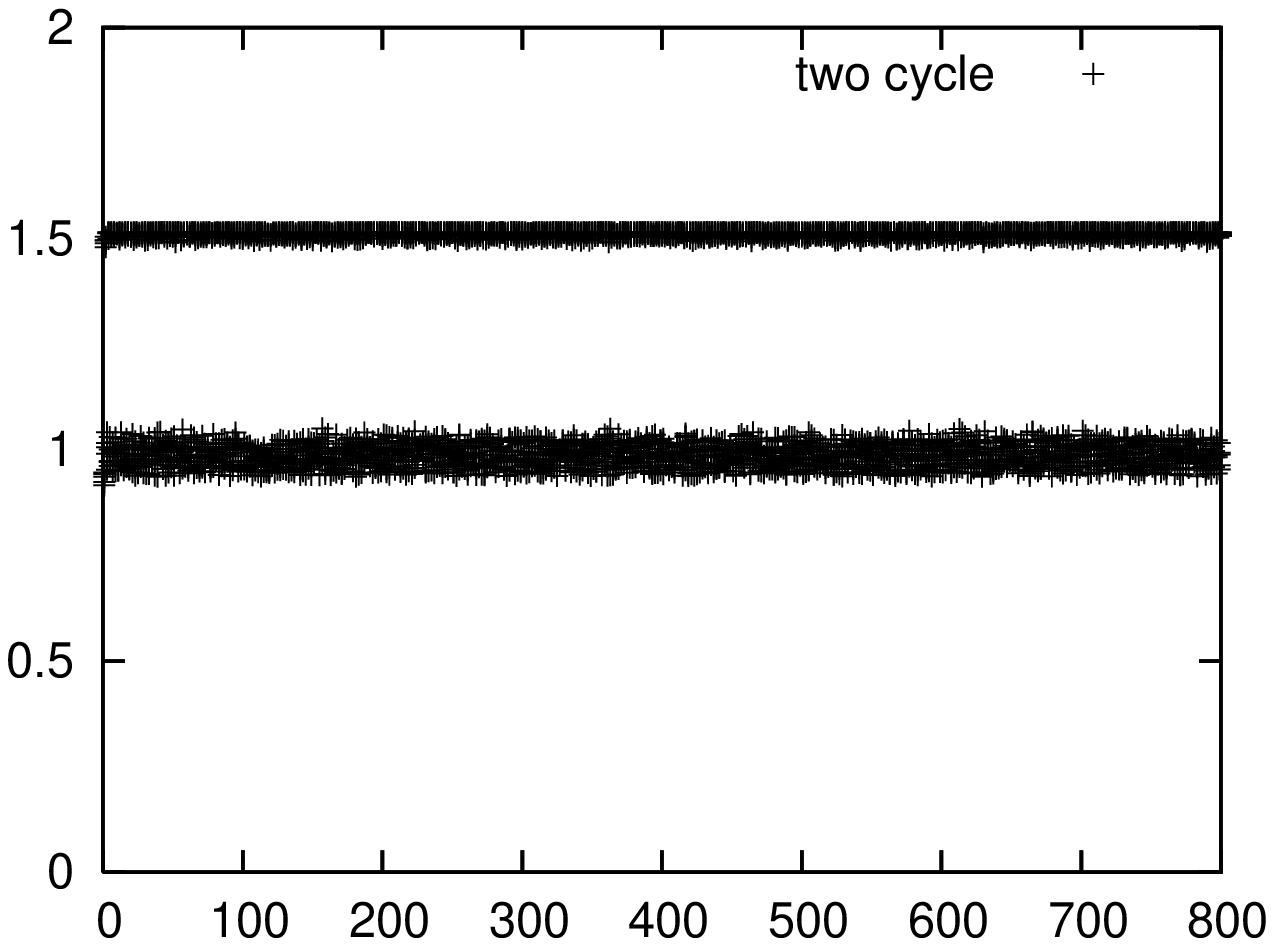}
\caption{Solutions of the difference equation with $f$ as in
(\protect{\ref{BH_NODY}}) and
multiplicative stochastic perturbations with $\ell=0.01$ (upper left), 
$\ell=0.025$ (upper right), where PF control aims at stabilizing $x^*=1.5$, $\nu \approx 0.4685$
and $\ell=0.015$ (two lower rows), with either $x^*=1.125$ (second row,left) or $x^*=1.1$ stabilized (second row, right), and 
$x^*=0$ is stabilized for $\nu =0.39$ (lower left); for $\nu =0.75$ there is no blurred equilibrium but oscillations
(lower right). Everywhere $x_0=0.5$.}
\label{figure2}   
\end{figure}

In Fig.~\ref{figure3} the same runs are presented for the case of the additive noise.

\begin{figure}[ht]
\centering
\includegraphics[height=.18\textheight]{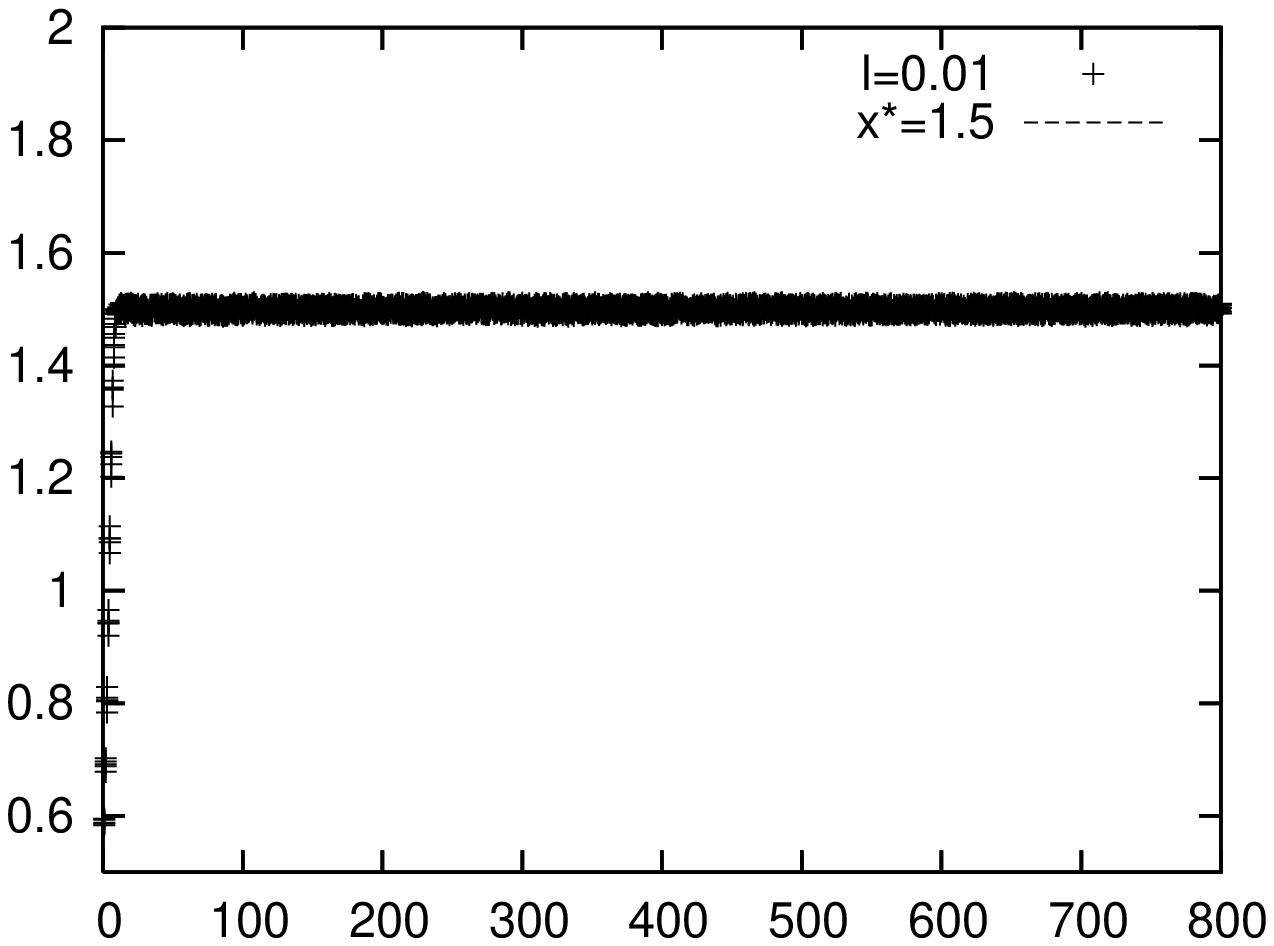}
\hspace{10mm}
\includegraphics[height=.18\textheight]{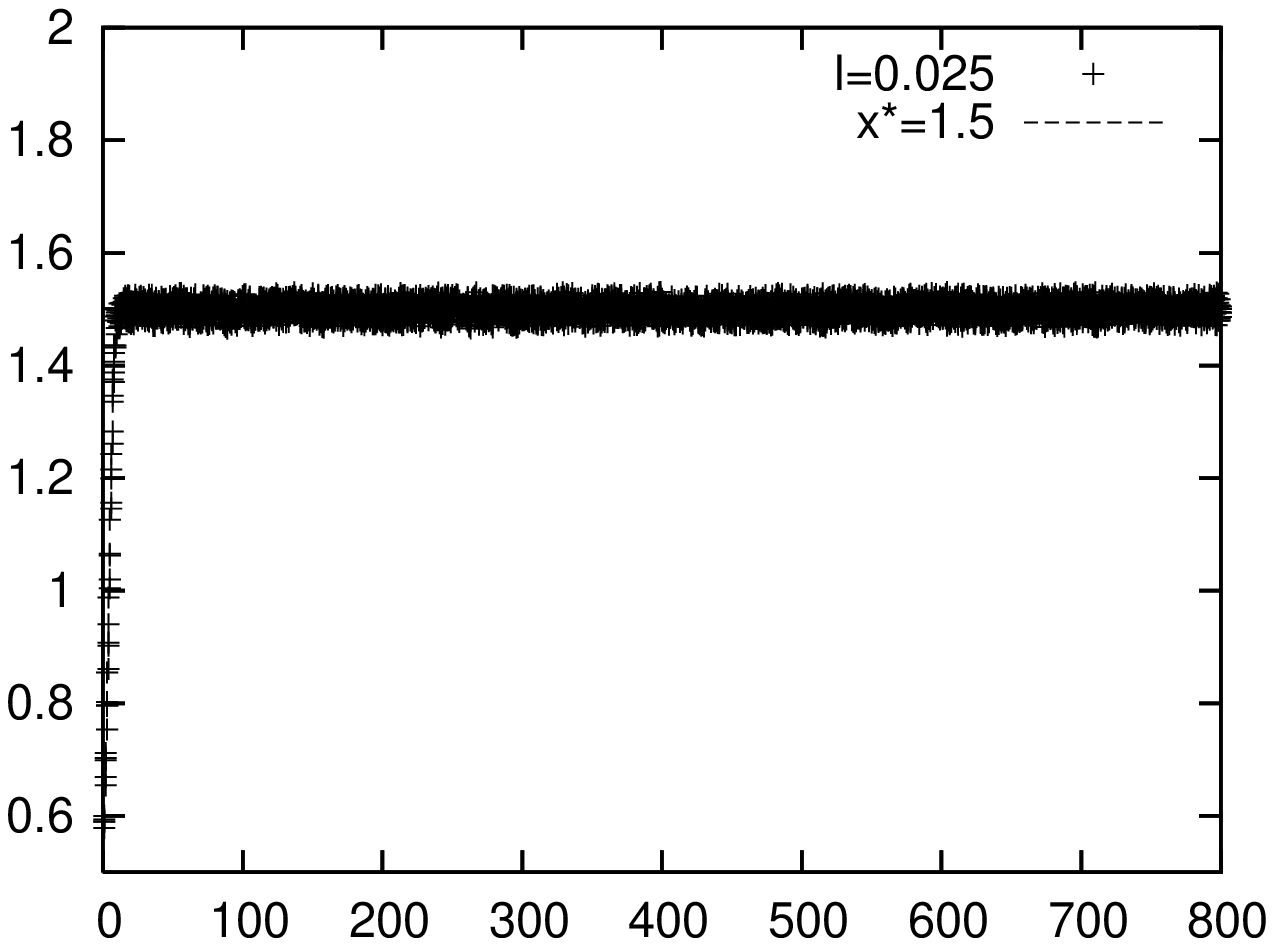}
\vspace{2mm}
\includegraphics[height=.18\textheight]{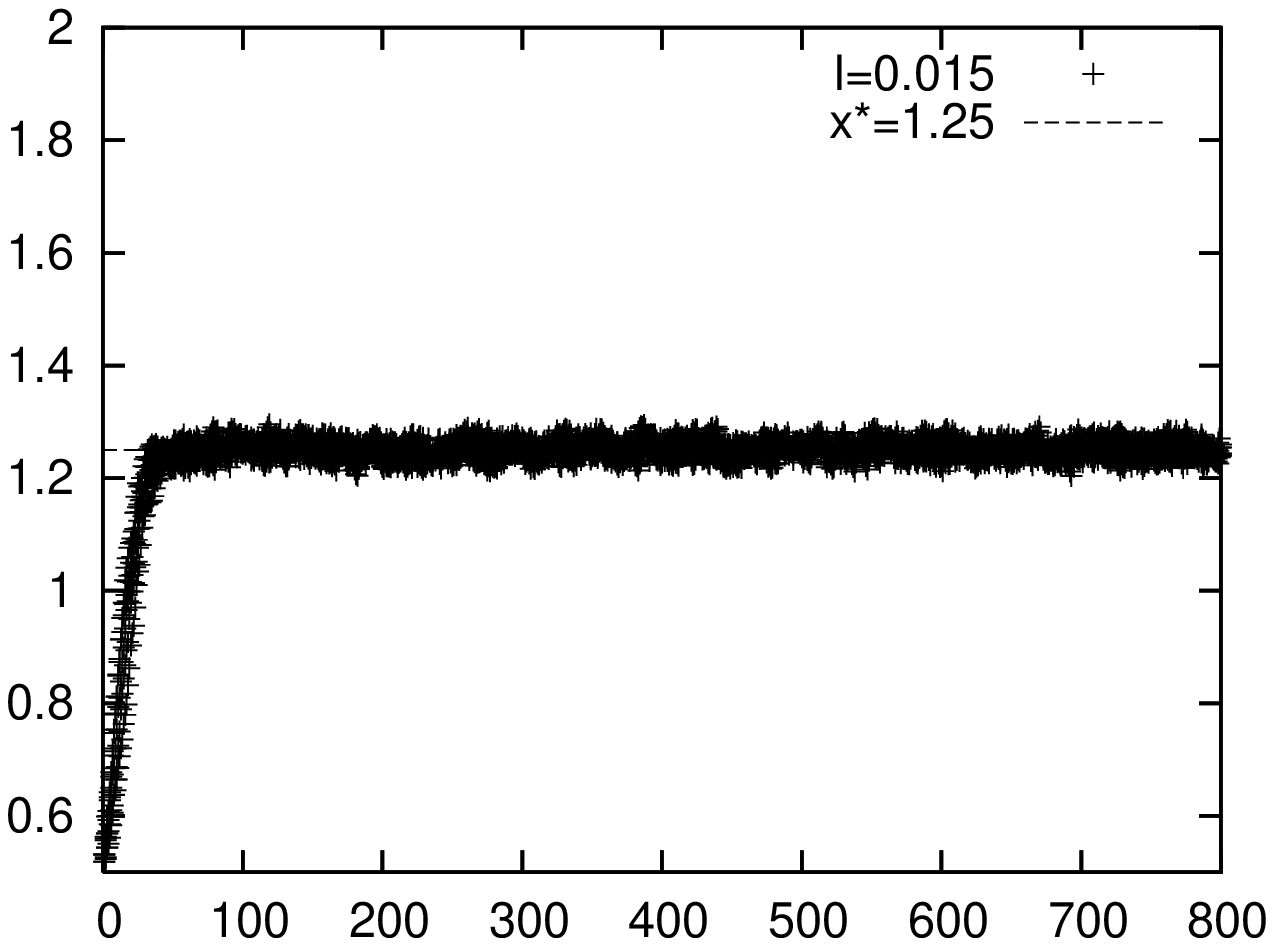}
\hspace{10mm}
\includegraphics[height=.18\textheight]{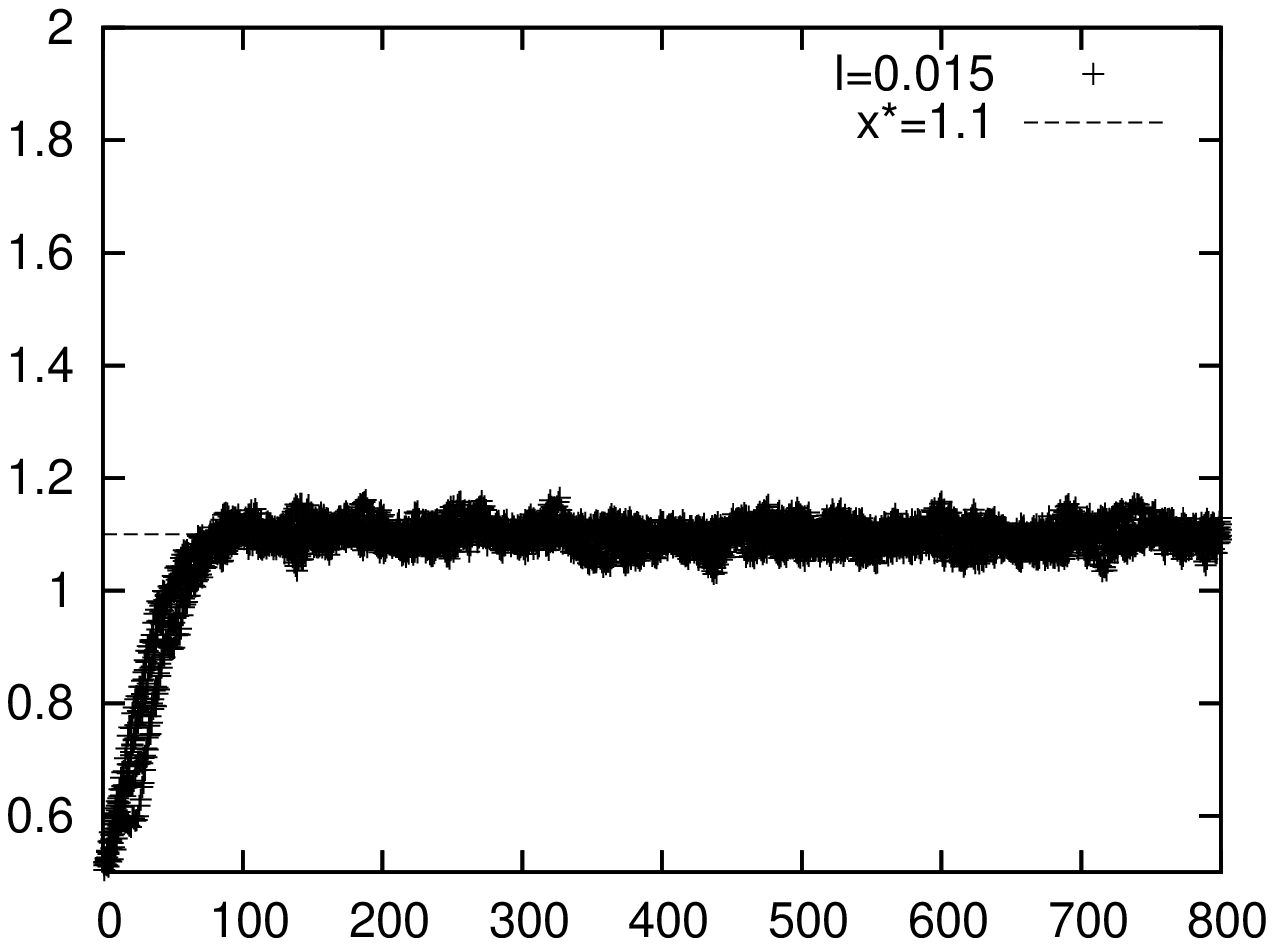}
\vspace{2mm}
\includegraphics[height=.18\textheight]{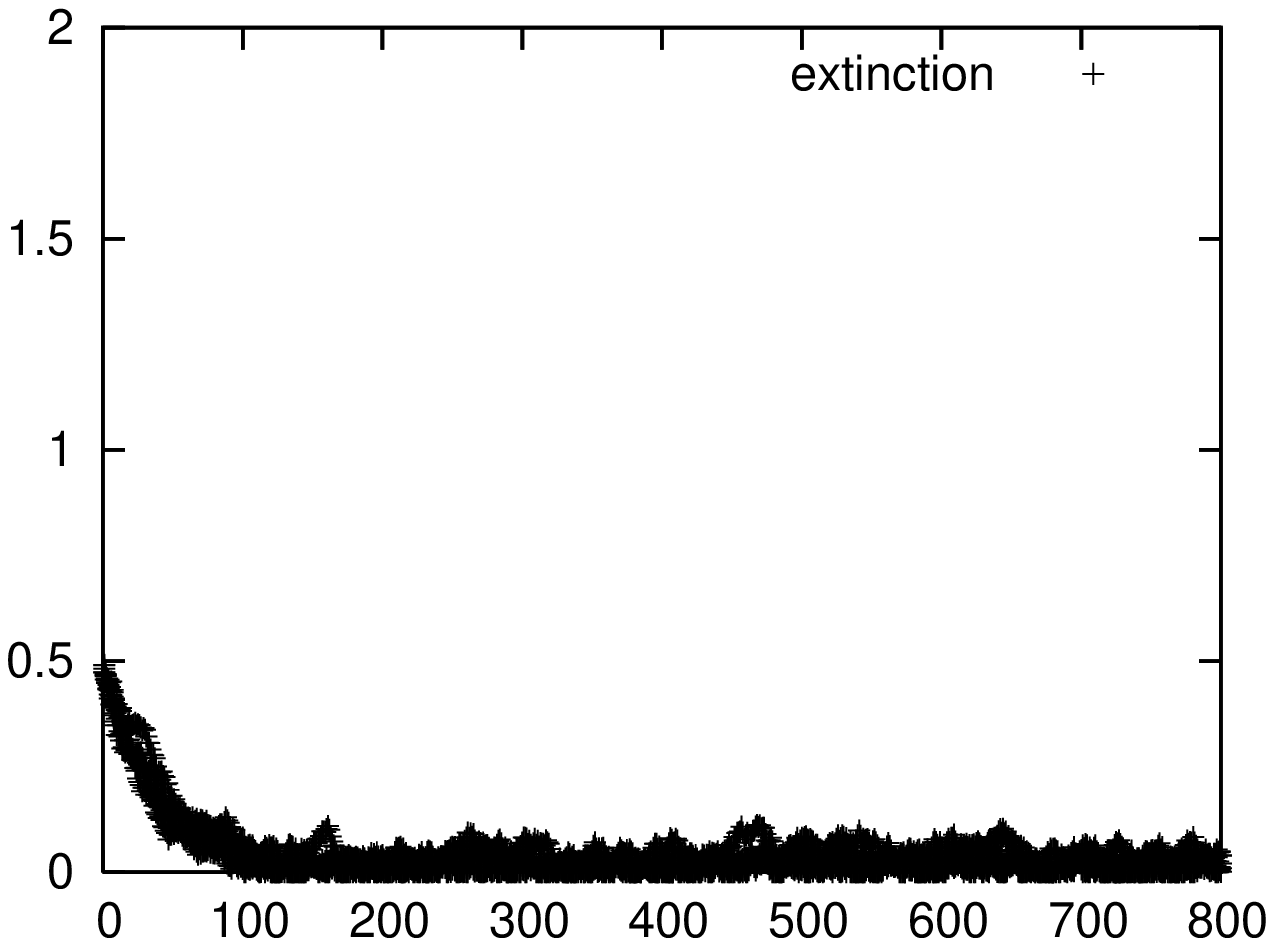}
\hspace{10mm}
\includegraphics[height=.18\textheight]{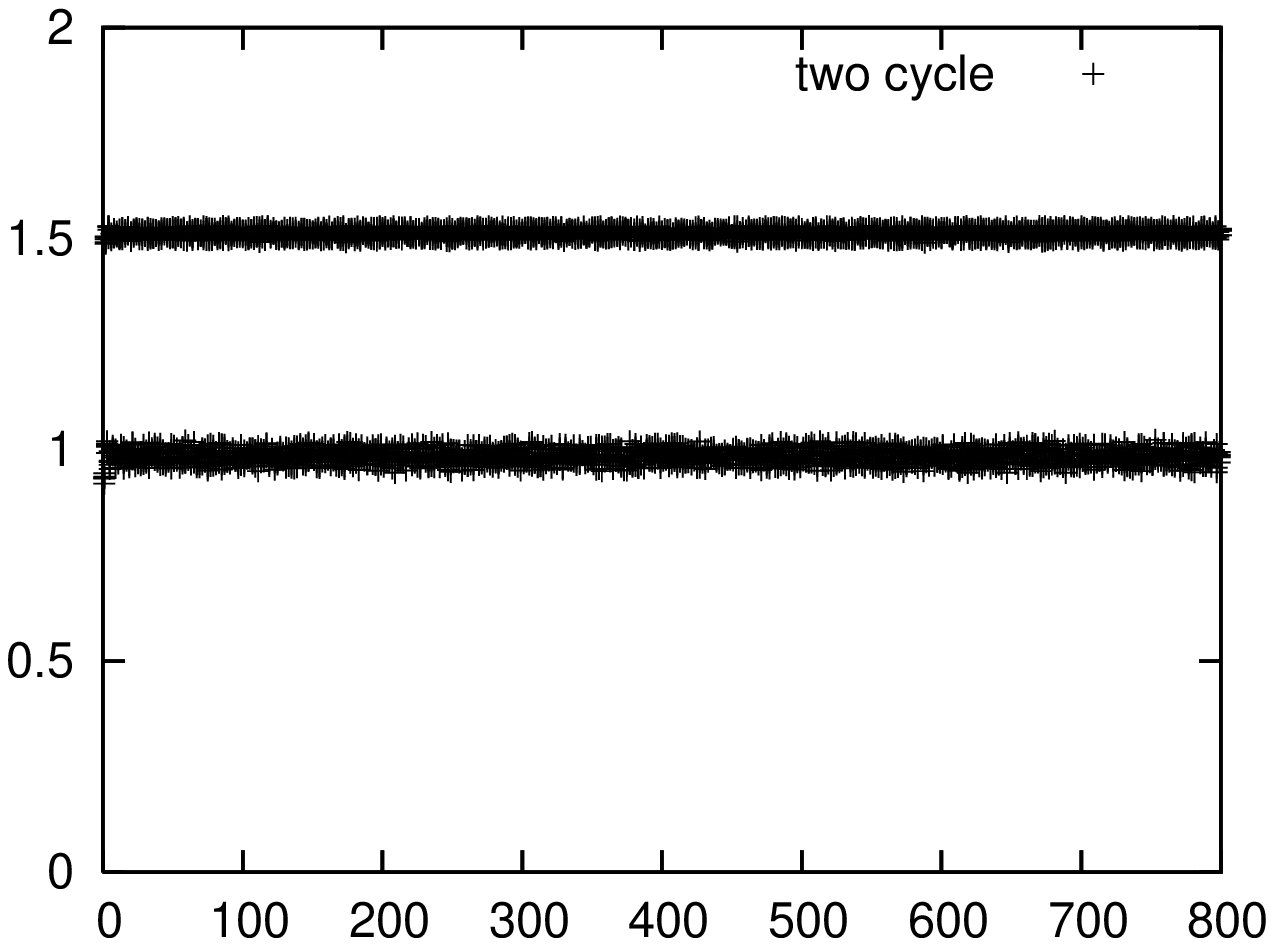}
\caption{Solutions of the difference equation with $f$ as in~(\protect{\ref{BH_NODY}}) and
additive stochastic perturbations with $\ell=0.01$ (upper left), 
$\ell=0.025$ (upper right), where PF control aims at stabilizing $x^*=1.5$, $\nu \approx 0.4685$
and $\ell=0.015$ (two lower rows), with either $x^*=1.125$  (second row,left) or $x^*=1.1$ stabilized (second row, right), and 
$x^*=0$ is stabilized for $\nu =0.39$ (lower left); for  $\nu =0.75$ there is no blurred equilibrium but oscillations
(lower right). Everywhere $x_0=0.5$.}
\label{figure3}   
\end{figure}
\end{example}

\begin{example}
Let us apply PF control to  \eqref{eq:CAMWA}. 
We can  stabilize $x^*=2.5$, which can be achieved for $\nu \approx 0.253555$.
The dependency of the solution variation on $\ell$ is illustrated in Fig.~\ref{figure4},
for $\ell=0.01$ (left) and $\ell=0.025$ (right). 
In Fig.~\ref{figure5}, left, we stabilize the maximum $\approx 2.877$ of the function in the right-hand side of
\eqref{eq:CAMWA}, with $\nu \approx 0.282$, in the middle, with $\nu=0.23$, the zero equilibrium is stabilized,
while the right figure corresponds to the blurred cycle with $\nu=0.35$.

\begin{figure}[ht]
\centering
\includegraphics[height=.2\textheight]{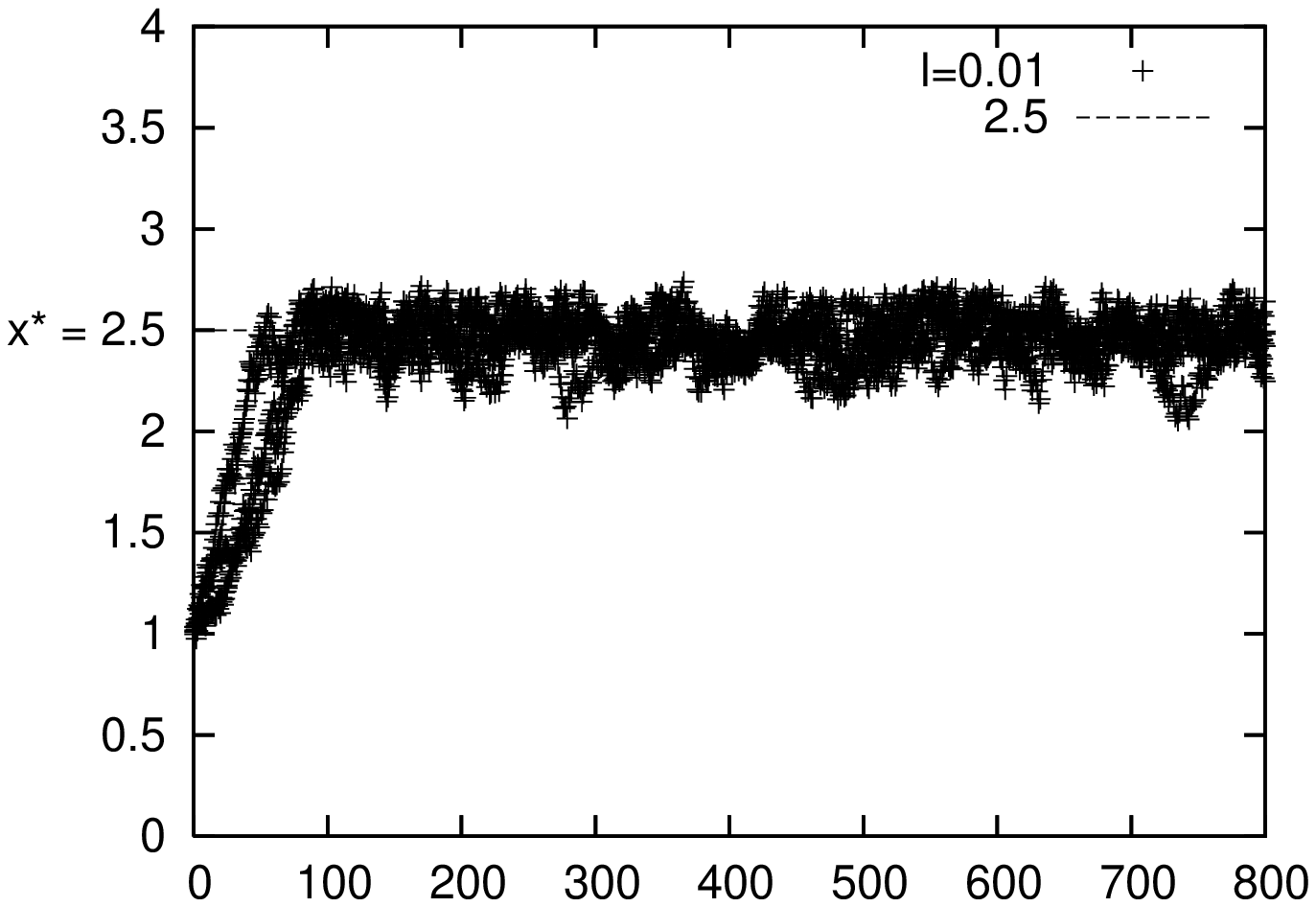}
\hspace{10mm}
\includegraphics[height=.2\textheight]{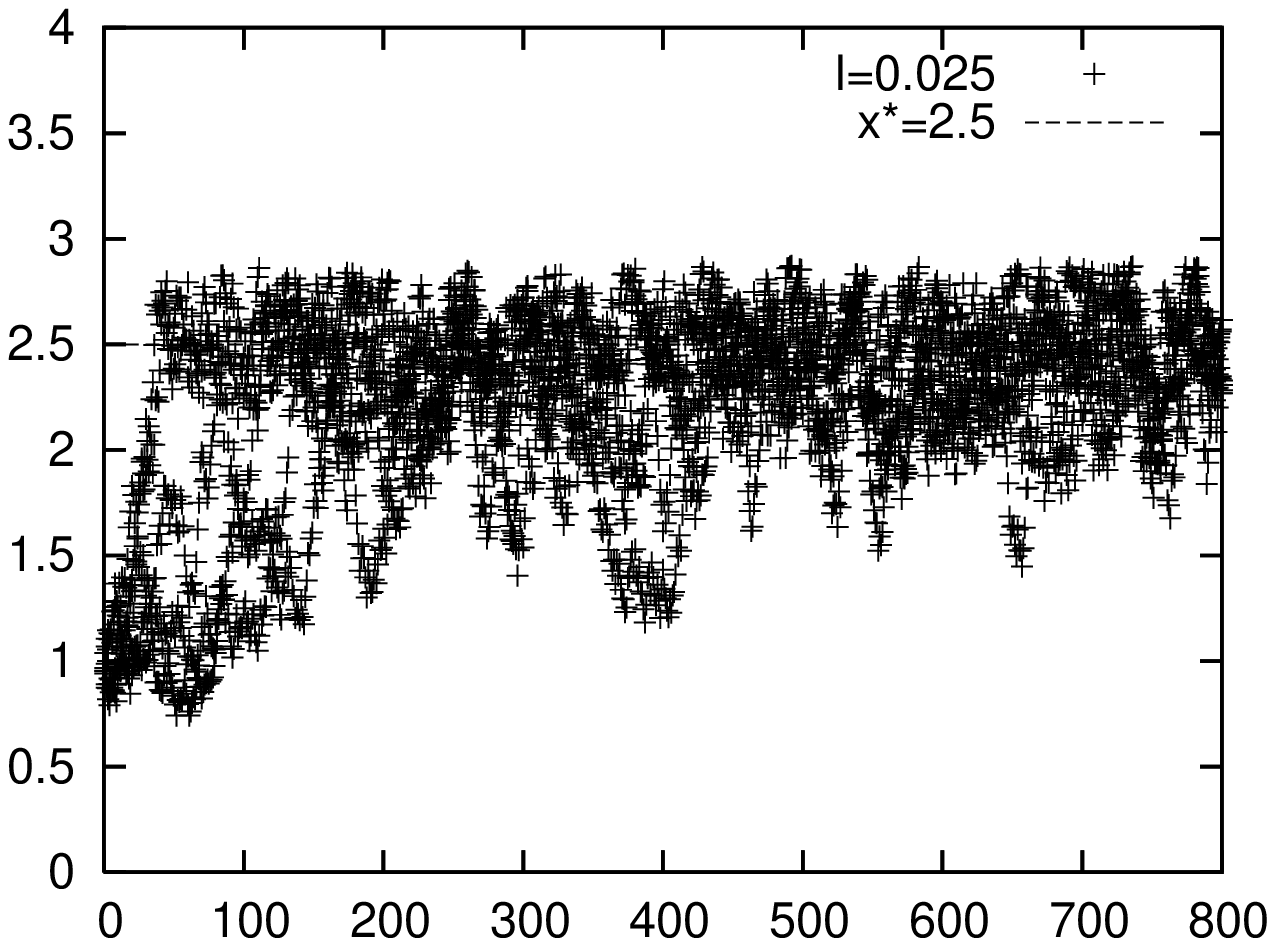}
\caption{Solutions of the difference equation with $f$ as in
(\protect{\ref{eq:CAMWA}}) and
multiplicative stochastic perturbations with $\ell=0.01$ (left)  and 
$\ell=0.025$ (right), where PF control aims at stabilizing $x^*=2.5$, $\nu \approx 0.253555$. In both figures, five
runs are illustrated, $x_0=1$.}
\label{figure4}   
\end{figure}

\begin{figure}[ht]
\centering   
\includegraphics[height=.15\textheight]{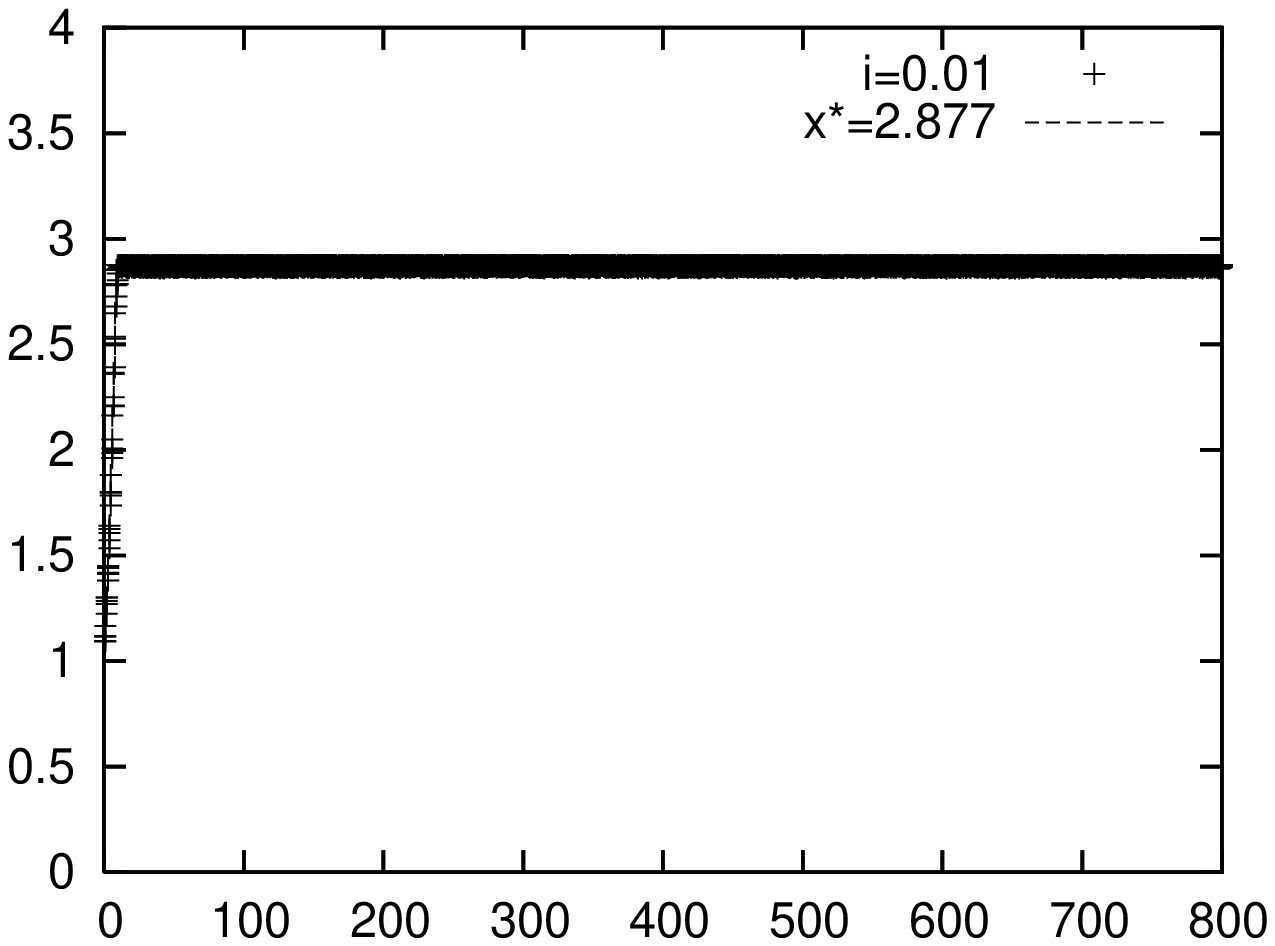}
\hspace{2mm}
\includegraphics[height=.15\textheight]{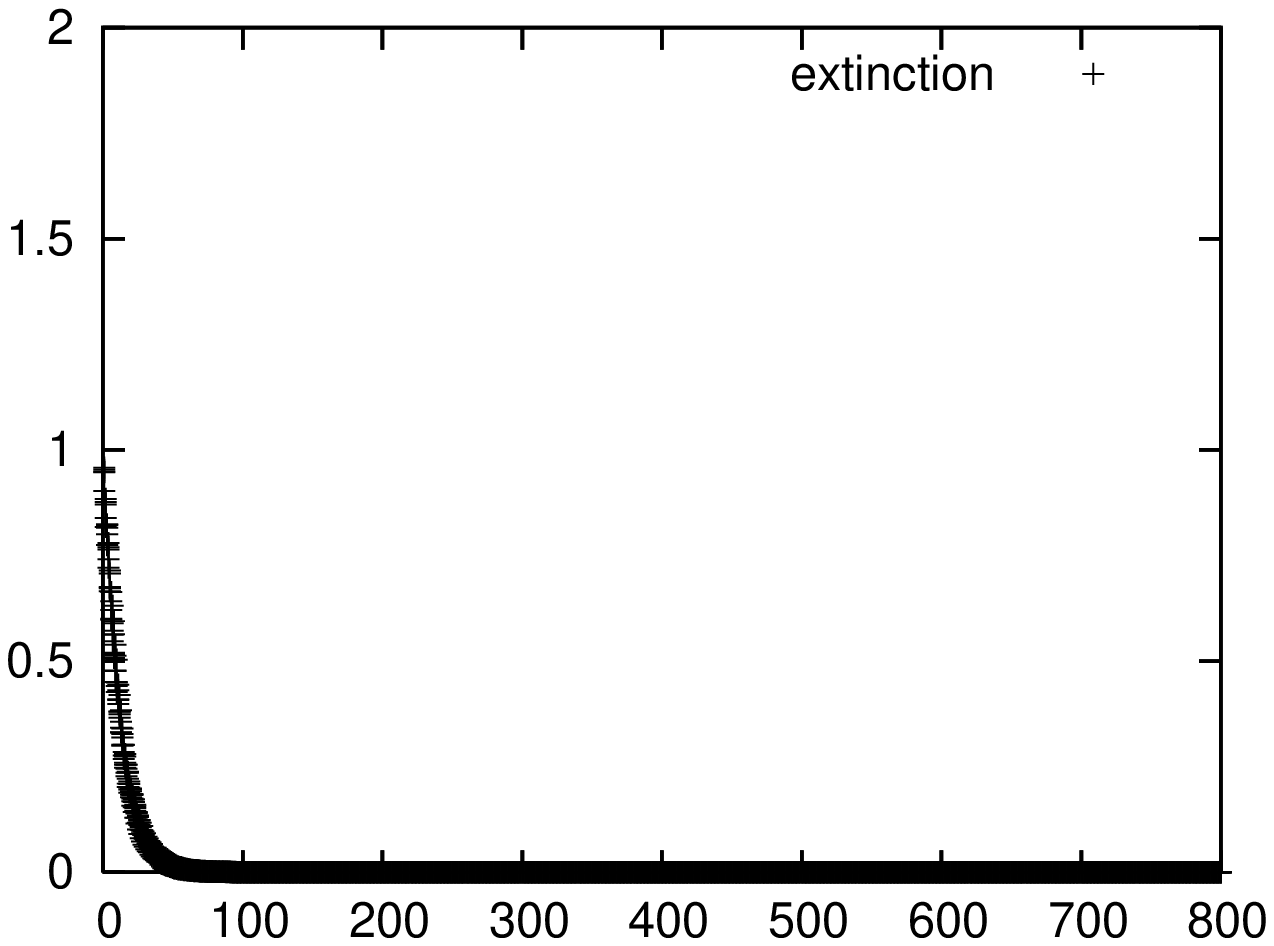}
\hspace{2mm}
\includegraphics[height=.15\textheight]{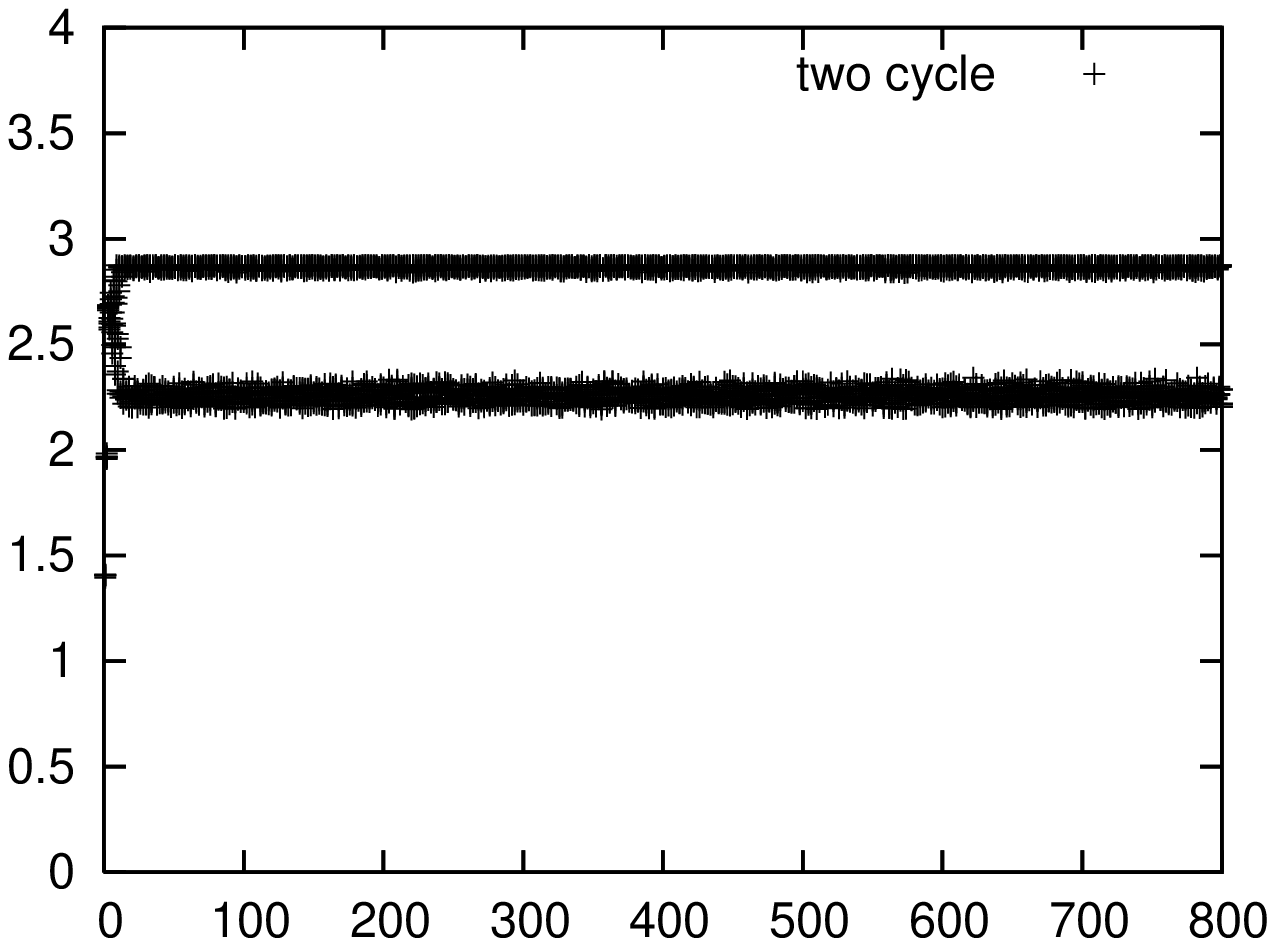}

\caption{Solutions of the difference equation with $f$ as in
(\protect{\ref{eq:CAMWA}}) and
multiplicative stochastic perturbations with $\ell=0.01$, where we stabilize
the maximum $\approx 2.877$ (left), the zero equilibrium with $\nu=0.23$ (middle) and obtained blurred cycle 
for $\nu=0.35$ (right). Everywhere we present
five runs, $x_0=1$.}
\label{figure5}
\end{figure}

In Fig.~\ref{figure6} we ullustrate stabilization of $x^*=2.5$ with $\nu \approx 0.253555$ and additive noise
with $\ell =0.01$ (left) and oscillation with $\nu=0.35$ (right).

\begin{figure}[ht]
\centering
\includegraphics[height=.18\textheight]{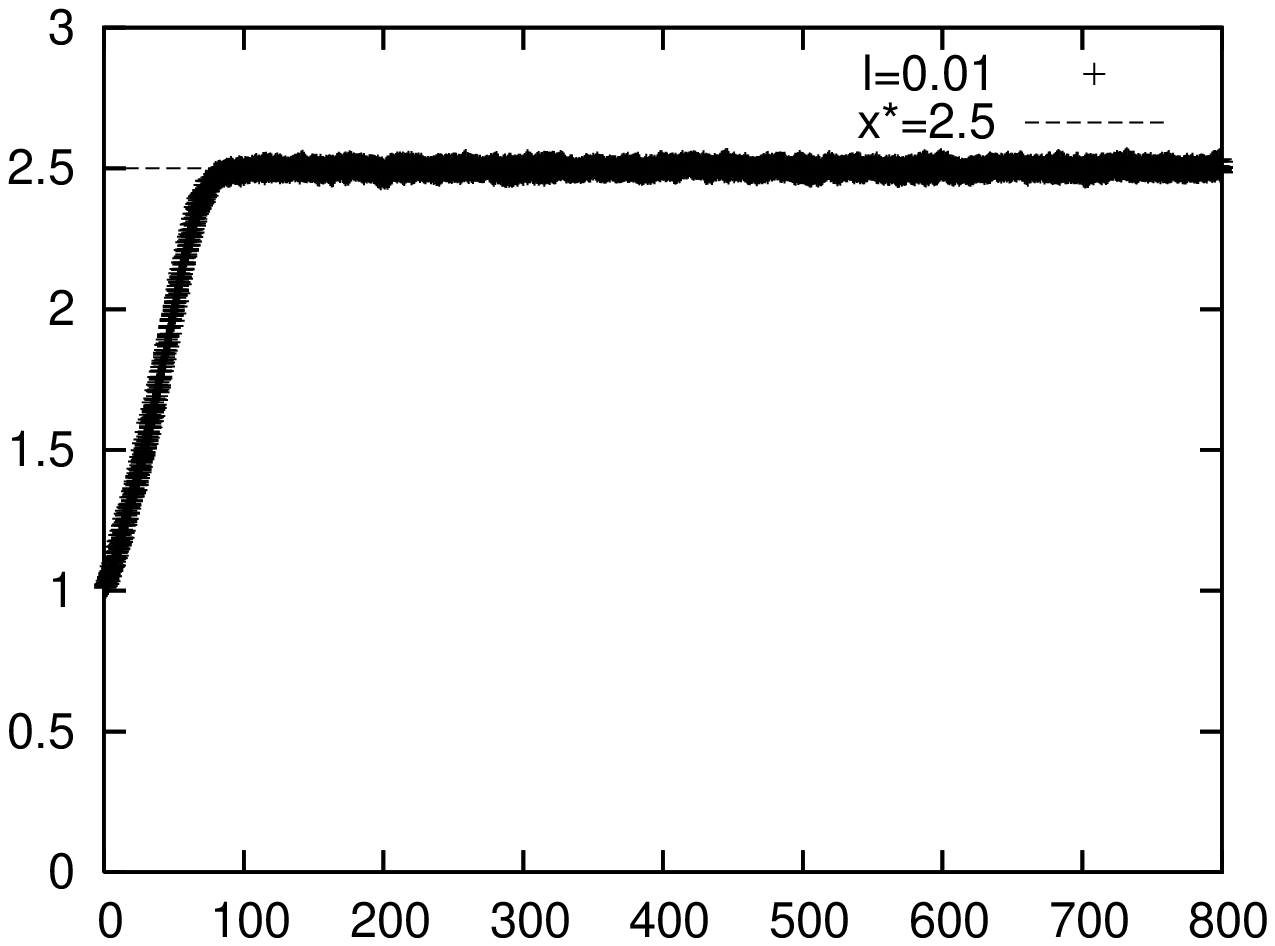}
\includegraphics[height=.18\textheight]{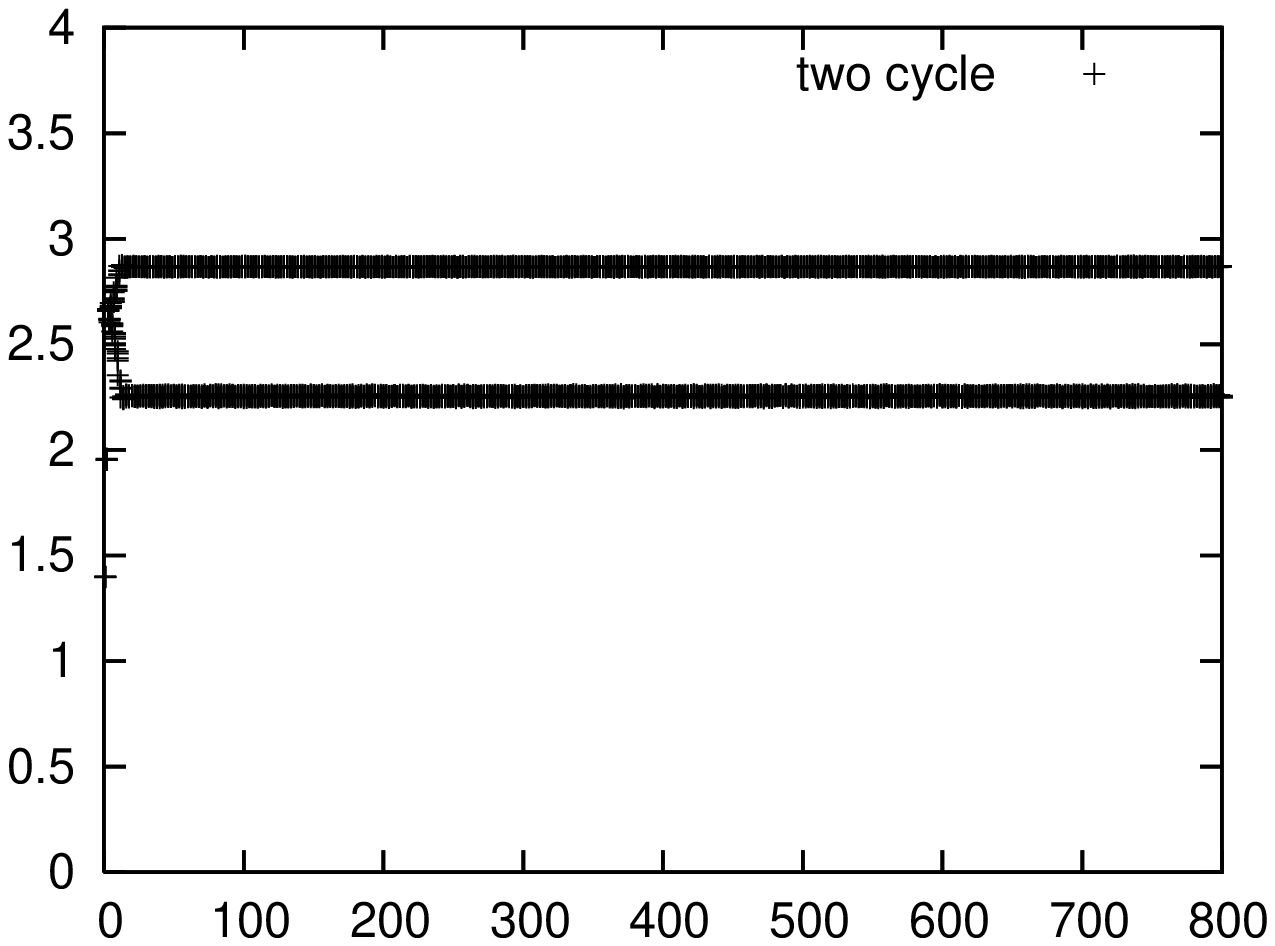}
\caption{Solutions of the difference equation with $f$ as in~(\protect{\ref{eq:CAMWA}}) and
additive stochastic perturbations with $\ell=0.01$, where $x^*=2.5$ is stabilized (left),
or there are sustainable blurred oscillations for $\nu=0.35$ (right).
In each figure, we present five runs, $x_0=1$.}
\label{figure6}   
\end{figure}
\end{example}

\section{Discussion and Open Problems}
\label{sec:discussion}

In the present paper we considered PF stabilization, and the approach is a little bit different than in
\cite{NODY, Carmona, Liz2010} where an appropriate range of stabilizing $\nu$ was established, and the stabilization 
point was found for any of such $\nu$. Here we focus on the range of points that can be stabilized with PF control,
and, once such $x^*$ is chosen, we identify the required control level. In addition, either the control or the environment can be stochastic, we get a stable blurred equilibrium as a result of the control.
Numerical examples illustrate that 
\begin{enumerate}
\item
the stochasticity in the control can lead to higher solution variation than  the stochasticity of the environment 
(compare Figs.~\ref{figure2} and \ref{figure3});
\item
there is a strong dependency of this variation on both the form of the function $f$ involved in the difference equation and the point to be stabilized 
(compare the right top and the middle figures in Fig.~\ref{figure2}).  
\end{enumerate} 

Though we deal with stochastic perturbations with a constant maximal amplitude, statements for
this amplitude tending to zero give an indication that results for a stochastic perturbation tending to zero 
with time will be similar to the non-stochastic PF control \cite{Carmona, Liz2010}. Certainly stochasticity of 
the control and the environment
can be combined.

Some relevant open problems and topics for future research are outlined below.
\begin{enumerate}
\item
Estimate the probability of certain solution bounds if the conditions of the theorems of the present paper  are not satisfied,
for example, in the case of the Allee effect.
\item
Consider unbounded, for example, normally distributed, perturbations.
\item
Explore the equation with both multiplicative and additive noise
\[
x_{n+1}=f\left( (\nu + l_1\chi_{n+1})x_n \right)+l_2\zeta_{n+1}, \,\, x_0>0, \quad n\in \mathbb N,
\]
describing a model where both the harvesting effort (or pest management efficiency) and 
the environment are stochastic.
\item
Investigate the case where not a positive equilibrium but a cycle can be stabilized,
with a control applied at each step.
\item
Consider the equation where a stochastic control is applied on certain steps only, 
so called ``impulsive" control \cite{NODY} and consider stabilization of blurred cycles. 
\end{enumerate}

\end{document}